\renewcommand\section{\@startsection {section}{1}{\z@}%
                                   {-3.5ex \@plus -1ex \@minus -.2ex}%
                                   {2.3ex \@plus.2ex}%
                                   {\centering\normalfont\bf}}
 \numberwithin{equation}{section}
\numberwithin{equation}{section}
\newcommand{\I}{\int_{0}^{1}}      \newcommand{\Id}{\int_{\mathbb{D}}}
\newcommand{\sk}{\sum_{k=0}^{\infty}}   \newcommand{\ap}{A^{p}}
\newcommand{\sn}{\sum_{n=0}^{\infty}}              
  \newcommand{\fap}{||f||_{A^{p}}}             \newcommand{\ii}{\mathcal {I}} \newcommand{\iu}{\mathcal {I}_{\mu_{\alpha+1}}}
\newcommand{\hd}{H(\mathbb{D})}            
\newcommand{\dd}{\mathbb{D}}           
\newcommand{\B}{\mathcal {B}}
\newcommand{\hur}{\mathcal {H}_{\mu}^{\alpha}} \newcommand{\hu}{\mathcal {H}_{\mu}}
\numberwithin{equation}{section}
\theoremstyle{plain}
\newtheorem{thm}{Theorem}[section]
\newtheorem{lemma}[thm]{Lemma}
\newtheorem{cor}[thm]{Corollary}
\newtheorem{re}[thm]{Remark}
\newenvironment{proof of Theorem 1.3}{{\noindent\it Proof of Theorem 1.3}\quad}{\hfill $\square$\par}
\newenvironment{proof of Theorem 1.1}{{\noindent\it Proof of Theorem 1.1}\quad}{\hfill $\square$\par}
\begin{document}
\title{Generalized  Hilbert operator acting on   Bergman  spaces }
\author{Pengcheng Tang$^{*,1}$  \ \ Xuejun  Zhang$^{2}$  }

\address{$^{1}$  College of Mathematics and Statistics, Hunan University of Science and Technology, Xiangtan, Hunan 411201, China}

\address{$^{2}$  College of Mathematics and Statistics, Hunan Normal University, Changsha, Hunan 410006, China}
\email{1228928716@qq.com}

\date{}
\keywords {Hilbert operator.  Bloch space. Bergman space. Carleson measure.
}

 \subjclass[2010]{47B35, 30H30, 30H20}
\thanks{$^*$ Corresponding author.\\
 \ \ \ \ \ \ Email:www.tang-tpc.com@foxmail.com(Pengcheng Tang)\   xuejunttt@263.net(Xuejun Zhang)\\
\ \ \ \ \ \ The first author was supported by the Scientific Research Fund of Hunan Provincial Education Department (NO. 24C0222) and the second author was supported by the Natural Science Foundation of Hunan Province (No. 2022JJ30369).
 }

\begin{abstract}
Let $\mu$ be a positive Borel measure on $[0,1)$ and $f(z)=\sum_{n=0}^{\infty}a_{n}z^{n}\in \hd$.  For $\alpha>-1$,  the generalized  Hilbert operator is defined as follows:
 $$\hur(f)(z)=\sum_{n=0}^{\infty}\frac{\Gamma(n+1+\alpha)}{\Gamma(n+1)\Gamma(\alpha+1)}\left(\sum_{k=0}^{\infty}\mu_{n,k}a_{k}\right)z^{n}, \ \ \  z\in \dd,$$
where, for $n\geq 0$, $\mu_n$ denotes the $n$-th  moment of the measure  $\mu$, and $\mu_{n,k}=\mu_{n+k}$.

 In  this paper, we characterize
the measures $\mu$ for which  $\hur$ is a bounded operator acting   from   Bergman space $A^{p}(0<p<\infty)$ into $ A^{q}(q\geq 1)$. We determine the Hilbert-Schmidt  class  on $A^{2}$ for all $\alpha>-1$. In addition, we  also study the boundedness (resp. compactness) of $\hur$ acting from $A^{p}$ to the Bloch space.
\end{abstract}
\maketitle
\section{Introduction }\label{sec1}
\ \  \ Let $\mathbb{D}=\{z\in \mathbb{C}:\vert z\vert <1\}$ denote the open unit disk of the complex plane $\mathbb{C}$ and $H(\mathbb{D})$ denote the space of all
analytic functions in $\mathbb{D}$. $H^{\infty}(\dd)$ denote the set  of bounded analytical functions.

The Bloch space $\mathcal {B}$ consists of those functions $f\in  H(\mathbb{D}) $ for which
$$
\vert \vert f\vert \vert _{\mathcal {B}}=\vert f(0)\vert +\sup_{z\in \mathbb{D}}(1-\vert z\vert ^{2})\vert f'(z)\vert <\infty.
$$

The little Bloch space $\mathcal {B}_{0}$ is the closed subspace of  $\mathcal {B}$ consists of those
functions $f\in  H(\mathbb{D}) $ such that
$$
\lim_{\vert z\vert \rightarrow 1^{-}}(1-\vert z\vert ^{2})\vert f'(z)\vert =0.
$$

 For $0<p<\infty$, the Bergman  space $A^{p}$ consists of    $f\in H(\mathbb{D})$ such that $$||f||^{p}_{A^{p}}=\int_{\mathbb{D}}|f(z)|^{p}dA(z)<\infty,$$
where $dA(z)=\frac{dxdy}{\pi}$ is the normalized area measure on $\mathbb{D}$. We refer to \cite{hb3,b7} for the notation and results about
Bergman spaces and Bloch spaces.

 Let $f(z)=\sn a_{n}z^{n}\in H(\dd)$. For any complex parameters $t$ and $s$ with the property that
neither $1+t$ nor $1+t+s$ is a negative integer, we define the  fractional differential operator $R^{t,s}$ and the fractional integral operator $R_{t,s}$ as follows:
$$
R^{t,s}f(z)=\sum_{n=0}^{\infty}\frac{\Gamma(2+t)\Gamma(n+2+t+s)}{\Gamma(2+t+s)\Gamma(n+2+t)}a_{n}z^{n},
$$
$$
R_{t,s}f(z)=\sum_{n=0}^{\infty}
\frac{\Gamma(2+t+s)\Gamma(n+2+t)}{\Gamma(2+t)\Gamma(n+2+s+s)}a_{n}z^{n}.
$$

Let $\mu$ be a finite Borel measure on $[0,1)$ and $n\in \mathbb{N}$. We use $\mu_{n}$ to denote the sequence of order $n$ of $\mu$, that is,  $\mu_{n}=\int_{[0,1)}t^{n}d\mu(t)$. Let $\mathcal {H}_{\mu}$ be the Hankel matrix $(\mu_{n,k})_{n,k\geq 0}$ with entries $\mu_{n,k}=\mu_{n+k}$. The matrix $\mathcal {H}_{\mu}$ induces  an operator on $ H(\mathbb{D}) $ by its action on the Taylor coefficients $:a_{n}\rightarrow\displaystyle{\sum_{k=0}^{\infty}}\mu_{n,k}a_{k},\ \ n=0,1,2,\cdots $.

If $f(z)=\displaystyle{\sum_{n=0}^{\infty}}a_{n}z^{n} \in  H(\mathbb{D})$, the generalized Hilbert operator is defined as follows:
$$
\mathcal {H}_{\mu}(f)(z)=\sum_{n=0}^{\infty}\left(\sum_{k=0}^{\infty}\mu_{n,k}a_{k}\right)z^{n}.
$$
If  $\mu$ is the Lebesgue measure on $[0, 1)$, then  $\mathcal {H}_{\mu}$ reduce to the classic Hilbert operator $\mathcal {H}$.
 It's known that the generalized Hilbert operator $\hu$ is closely related to the following integral operator
 $$\ii_{\mu}(f)(z)=\I \frac{f(t)}{1-tz}d\mu(t).$$

The  study of the operator $\mathcal {H}_{\mu}$  on analytic function spaces was  initiated by Widom \cite{wid} in 1966. He proved that  the operator $\mathcal {H}_{\mu}$  is bounded (resp.  compact) on the Hardy space $H^{2}$ if and only if $\mu$ is a Carleson measure (resp.  vanishing  Carleson measure).  In 2000, Diamantopoulos and Siskakis  \cite{H8} studied the classic Hilbert operator  $\mathcal {H}$ on  Hardy spaces $H^{p}$ for $1<p<\infty$.  Subsequently, Diamantopoulus \cite{H9}
considered the boundedness of $\mathcal {H}$ on the Bergman spaces $\ap$ for $2<p<\infty$. In 2010, Galanopoulos and Pel\'{a}ez \cite{H14} investigated the boundedness of the operator $\hu$ on  Hardy space $H^{1}$ and on  Bergman space $A^{2}$.   In 2014, Chatzifountas, Girela and Pel\'{a}ez  \cite{H3}  systematically studied
the boundedness (resp.  compactness)  of $\mathcal {H}_{\mu}$  from $H^{p}$ to $H^{q}$ for $0<p,q<\infty$.  Since then, the study of  generalized Hilbert operator   has attracted the attention of many scholars.  See \cite{H1,H13,H5,H21,ls,H22,H23,H18,ba,t2,y1,y2,y3} for  more information on generalized Hilbert operator on  spaces  of analytic functions.

The  derivative or fractional derivative paly a basic role in the theory of  analytic function spaces.   This motivates us to consider the derivative or fractional derivative of the operator  $\hu$.
 In this paper, we consider the generalized  Hilbert operator as follows:
 $$\setlength{\abovedisplayskip}{3pt}\setlength{\belowdisplayskip}{3pt}
 \mathcal {H}^{\alpha}_{\mu}(f)(z)=\sum_{n=0}^{\infty}\frac{\Gamma(n+1+\alpha)}{\Gamma(n+1)\Gamma(\alpha+1)}\left(\sum_{k=0}^{\infty}\mu_{n,k}a_{k}\right)z^{n}, \ \ (\alpha>-1). $$
 
 The operator $\mathcal {H}^{\alpha}_{\mu}$ can be regarded as the fractional derivative of  $\mathcal {H}_{\mu}$, since it is easy to obtain  that  $\mathcal {H}^{\alpha}_{\mu}(f)=R^{-1,\alpha}\mathcal {H}_{\mu}(f)$. Hence, we may say that  $\mathcal {H}^{\alpha}_{\mu}$ is the  fractional derivative Hilbert operator. If $\alpha=0$, the operator $\mathcal {H}^{0}_{\mu}$ is just the $\hu$.
  If $\alpha=1$, then the operator $\mathcal {H}^{\alpha}_{\mu}$  is called Derivative-Hilbert operator. Naturally, the operator $\hur$ is  closely related to the
operator  $\iu$ which  is defined by
 $$\ii_{\mu_{\alpha+1}}(f)(z)=\I \frac{f(t)}{(1-tz)^{\alpha+1}}d\mu(t),\ \ \ (\alpha>-1).$$
If $\alpha=0$, then  $\ii_{\mu_{\alpha+1}}$ is just  the integral operator $\ii_{\mu}$.

The connection between  $\mathcal {H}_{\mu}$ and $\mathcal {H}^{\alpha}_{\mu}$ motivates us to consider the operator  $\hur$  for all $\alpha>-1$ in a unified manner. 
 The Bergman space  is one of the most basic  analytic function spaces. The operator $\hu$ on Bergman spaces has achieved a few  results, but there are still many unsolved problems. The following question is raised naturally.\\
\emph{Question: what are the necessary and sufficient conditions for $\hur: A^{p} \rightarrow A^{q}$$(0<p,q\leq\infty)$ to be a bounded (or compact) operator?}
Here, the  space $A^{\infty}$ should be interpreted as  the Bloch space $\B$ since the Bloch space $\B$ can be viewed as the limit case of $A^{p}$ as  $p\rightarrow +\infty$.

The study of the operator $\hur$ between Bergman spaces did not proceed  well even for $\alpha=0$.  In 2019, Girela and Merch\'{a}n \cite{H24} proved that $\hu$ is bounded on  $\ap(2<p<\infty)$ if and only if $\mu$ is a Carleson measure.
 Ye and Zhou solved  the problem for $\alpha=1$ and $1\leq q<p=\infty$ in \cite{H19}. They also solved the problem for $\alpha=1$, $p\leq q$ and $1\leq q\leq\infty$ in \cite{d1}. The first author of this paper  solved  the problem for all $\alpha>-1$ and $1\leq q<p=\infty$ in \cite{t1}.
 Aguilar-Hern\'{a}ndez,  Galanopoulos and Girela \cite{g1} considered the  operator $\hu$ on $\ap$ for $1\leq p<2$. They solved the problem for $\alpha=0$ and $p=q=1$. They also obtain  a  sufficient condition and  a necessary condition for the case   $1<p<2$.
  Recently, the authors  were  informed that Sun et al. solved the case $\alpha=0$ and $1<p=q<2$.
A complete solution for all indicators is far away.  In this paper, we will continue to explore this problem,   giving some results for certain cases.

This paper is organized as follows. In section 2, we list some of the lemmas that will be used.
In section 3, we mainly investigate the operator $\hur$ acting between the Bergman spaces. We  give  the  necessary and sufficient conditions such that the operator $\hur$ acting from $\ap(0<p\leq 1)$ into $A^{q}(q\geq 1)$ to be bounded. By means of the Marcinkiewicz interpolation theorem, we give the necessary and sufficient
conditions  for the  operator $\hur$ to be bounded on $A^{p}(1\leq p \leq 2)$. Finally, we  determine the Hilbert-Schmidt class on $A^{2}$ for all $\alpha>-1$.
 Section 4 is  devote to study the boundeness and compactness of $\hur$ acting from  Bergman space $A^{p}$ to  the Bloch sapce $\B$.
 By using the sublinear  generalized integral type Hilbert operator and a decomposition theorem of  the Bloch space $\B$, we have completely characterized the boundedness (resp. compactness) of  $\hur:\ap \rightarrow \B$.

 As mentioned previously,  the Carleson-type measures play a basic role in the studies of  the generalized Hilbert operator. Let $I\subset \partial \mathbb{D}$ be an arc, and  $\vert I\vert $  denote the length of $I$. The Carleson square $S(I)$ is defined as
$$S(I)=\{re^{i\vartheta}:e^{i\vartheta}\in I,\ 1-\frac{\vert I\vert }{2\pi}\leq r<1\}.$$

Let $\mu$ be a positive Borel measure on $\mathbb{D}$. For $0\leq \beta<\infty$ and $0<t<\infty$, we
say that $\mu$ is a $\beta$-logarithmic $t$-Carleson measure (resp.a vanishing $\beta$-logarthmic $t$-Carleson measure) if
$$
\sup_{ I \subset \partial\mathbb{D}}\frac{\mu(S(I))(\log\frac{2\pi}{\vert I\vert })^{\beta}}{\vert I\vert ^{t}}<\infty,\ \ \left(\mbox{resp.}\ \ \lim_{\vert I\vert \rightarrow0}\frac{\mu(S(I))(\log\frac{2\pi}{\vert I\vert })^{\beta}}{\vert I\vert ^{t}}=0\right).
$$
If $\beta=0$ and $t=1$, we  say  that $\mu$ is a Carleson measure. See \cite{H16} for more about  logarithmic type Carleson measure.

A positive Borel measure $\mu$ on $[0,1)$ can be seen as a Borel measure on $\mathbb{D}$ by identifying it with the measure $\overline{\mu}$ defined by
$$
 \overline{\mu}(E)=\mu(E\cap [0,1)), \ \ \mbox{for any Borel subset }\ E \ \ \mbox{of}\  \ \mathbb{D}.
$$

In this way, a positive Borel measure $\mu$ on $[0,1)$ is a $\beta$-logarithmic $t$-Carleson measure if and only if there exists a constant $M>0$ such that
$$
\mu([s,1))\log^{\beta}\frac{e}{1-s}\leq M(1-s)^{t},\ \ 0\leq s<1.
$$

 Throughout the paper, the letter $C$ will denote an absolute constant whose value depends on the parameters
indicated in the parenthesis, and may change from one occurrence to another. We will use
the notation $``P\lesssim Q"$ if there exists a constant $C=C(\cdot) $ such that $`` P \leq CQ"$, and $`` P \gtrsim Q"$ is
understood in an analogous manner. In particular, if  $``P\lesssim Q"$  and $ ``P \gtrsim Q"$ , then we will write $``P\asymp Q"$.
We shall use the notation that for any given $s>1$, $s'$ will denote the conjugate exponent of $s$, that is, $\frac{1}{s}+\frac{1}{s'}=1$, or $s'=\frac{s}{s-1}$.

\section{Preliminary Results}\label{sec2}


Mateljevi\'{c} and Pavlovi\'{c} \cite{L14} proved the following result.
\begin{lemma}\label{lm2.1}
Let $1<p<\infty$ and $\gamma>-1$. For a function $f\in \hd$  we define:
$$K_{1}(f):=\Id |f(z)|^{p}(1-|z|)^{\gamma}dA(z),\ \ \ K_{1}(f):=\sn 2^{-n(\gamma+1)}||\Delta_{n}f||^{p}_{p}.$$
Then, $K_{1}(f)\asymp K_{2}(f)$.
\end{lemma}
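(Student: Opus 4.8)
The plan is to prove the two inequalities $K_1(f)\lesssim K_2(f)$ and $K_2(f)\lesssim K_1(f)$ separately, using the standard Littlewood--Paley decomposition into dyadic polynomial blocks $\Delta_n f$. Throughout I would write $I_n=\{z: 1-2^{-n}\le |z|<1-2^{-n-1}\}$ for the dyadic annuli, so that on $I_n$ one has $1-|z|\asymp 2^{-n}$, and split the integral $K_1(f)$ as $\sum_n \int_{I_n}|f(z)|^p(1-|z|)^\gamma\,dA(z)\asymp \sum_n 2^{-n\gamma}\int_{I_n}|f(z)|^p\,dA(z)$. The heart of the matter is to compare $\int_{I_n}|f|^p\,dA$ with $2^{-n}\|\Delta_n f\|_p^p$, and this is where the dyadic structure does the work: on the annulus $I_n$ the "relevant" part of the Taylor series of $f$ is concentrated in the blocks $\Delta_{n-1}f,\Delta_n f,\Delta_{n+1}f$, while the low-frequency tail $\sum_{j< n-1}\Delta_j f$ and the high-frequency tail $\sum_{j>n+1}\Delta_j f$ are controlled by geometrically decaying factors.

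For the upper bound $K_2(f)\lesssim K_1(f)$: fix a block $\Delta_n f$. Since $\Delta_n f$ is a polynomial with spectrum in $[2^n,2^{n+1})$, I would use a reproducing/Bernstein-type argument — e.g. integrate $|\Delta_n f|^p$ over the annulus $I_n$, where $|z|\asymp 1$, and invoke the fact that for a polynomial $P$ with spectrum in a dyadic block the integral means $M_p(r,P)$ are comparable for $r$ in the range $[1-c2^{-n},1]$ (this is a consequence of (2.1)-type estimates together with Hardy--Littlewood; alternatively one can use that $\Delta_n$ acts as a bounded Fourier multiplier on $L^p(\mathbb T)$, $p>1$, with norm independent of $n$). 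Concretely, applying the multiplier $\Delta_n$ to $f_r(e^{i\theta})=f(re^{i\theta})$ gives $\|\Delta_n f_r\|_{L^p(\mathbb T)}\lesssim \|f_r\|_{L^p(\mathbb T)}$ uniformly in $r$, and then integrating $r^p\,dr$ against $(1-r)^\gamma$ over $r\in I_n$ and summing in $n$ yields $\sum_n 2^{-n(\gamma+1)}\|\Delta_n f\|_p^p\lesssim \sum_n 2^{-n\gamma}\int_{I_n}|f|^p\,dA\asymp K_1(f)$. The boundedness of $\Delta_n$ on $L^p(\mathbb T)$ for $1<p<\infty$, uniformly in $n$, is exactly the place where the hypothesis $p>1$ is used (it fails at $p=1$), and I expect verifying the uniform multiplier bound — or equivalently a uniform Bernstein/Hardy--Littlewood comparison of means across the annulus — to be the main technical obstacle.

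For the lower bound $K_1(f)\lesssim K_2(f)$: on each annulus $I_n$ write $f=\sum_{j\ge 0}\Delta_j f$ and estimate $\left(\int_{I_n}|f|^p\,dA\right)^{1/p}\le \sum_j\left(\int_{I_n}|\Delta_j f|^p\,dA\right)^{1/p}$. For $j\le n$ one bounds $\int_{I_n}|\Delta_j f|^p\,dA\lesssim 2^{-n}\|\Delta_j f\|_p^p$ (the polynomial $\Delta_j f$ has sup of its means at $r=1$), and for $j>n$ one gains a factor like $2^{(n-j)(\text{something positive})}$ because the monomials $z^k$ with $k\ge 2^j$ are small, of size $\lesssim (1-2^{-n})^{2^j}\lesssim e^{-c2^{j-n}}$, on $I_n$ — giving super-geometric decay. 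Substituting these into $\sum_n 2^{-n\gamma}\int_{I_n}|f|^p\,dA$, raising to the $p$-th power, and summing the resulting convolution-type double sum over $(n,j)$ using Young's inequality (or just absolute summability of the geometric kernel) collapses everything to $\sum_j 2^{-j(\gamma+1)}\|\Delta_j f\|_p^p=K_2(f)$. I would organize this last step by isolating the diagonal $j\in\{n-1,n,n+1\}$, which produces the main term, and treating the off-diagonal contributions as error terms summed via a discrete Young inequality; the bookkeeping is routine once the per-annulus estimates above are in hand.
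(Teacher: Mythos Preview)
The paper does not prove this lemma at all: it simply attributes the result to Mateljevi\'{c} and Pavlovi\'{c} and cites their paper, with no argument given. So there is no ``paper's own proof'' to compare against.

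Your outline is essentially the standard proof of the Mateljevi\'{c}--Pavlovi\'{c} theorem and is correct in structure. The two key ingredients you identify are exactly the right ones: (i) uniform $L^p(\mathbb{T})$-boundedness of the dyadic projections $\Delta_n$ for $1<p<\infty$ (via the M.~Riesz projection theorem), which gives $K_2(f)\lesssim K_1(f)$ after comparing $\|\Delta_n f_r\|_p$ with $\|\Delta_n f\|_p$ on the annulus $I_n$; and (ii) the geometric/super-geometric decay of $\|\Delta_j f\|_{L^p(I_n)}$ in $|j-n|$, which after Minkowski and a discrete Young inequality collapses to $K_1(f)\lesssim K_2(f)$. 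One small point worth tightening in a full write-up: in step (i) you need not only $\|\Delta_n f_r\|_p\lesssim\|f_r\|_p$ but also $\|\Delta_n f\|_p\lesssim\|\Delta_n f_r\|_p$ for $r\in I_n$; this follows since the spectrum of $\Delta_n f$ lies in $[2^n,2^{n+1})$ and $r^{2^{n+1}}\ge(1-2^{-n})^{2^{n+1}}$ is bounded below by a universal constant. With that clarified, the argument is complete.
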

The following lemma can be found in \cite[ page 83 ]{hb3}.
\begin{lemma}\label{lm2.2}
Let  $f(z)=\sn a_{n}z^{n}\in\hd$, then
$$\sum_{n=1}^{\infty}n^{-1}|a_{n}|<\infty \ \ \Rightarrow f\in A^{1}\ \ \Rightarrow \sum_{n=1}^{\infty}n^{-2}|a_{n}|<\infty.$$
\end{lemma}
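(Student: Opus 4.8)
The plan is to prove the two implications separately, using the power series expansions of the relevant reproducing-type kernels and Parseval-type estimates together with the standard asymptotics $\int_{\dd}|z|^{2n}\,dA(z)\asymp \frac{1}{n+1}$.

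For the first implication, suppose $\sum_{n\ge 1} n^{-1}|a_n|<\infty$. I would write $f(z)=\sum_n a_n z^n$ and estimate $\|f\|_{A^1}=\int_{\dd}|f(z)|\,dA(z)$ directly. The natural route is to integrate in polar coordinates: $\int_{\dd}|f(z)|\,dA(z)=2\int_0^1 M_1(r,f)\,r\,dr$ where $M_1(r,f)=\frac{1}{2\pi}\int_0^{2\pi}|f(re^{i\theta})|\,d\theta$. Using the elementary bound that $M_1(r,f)$ is dominated by $\sum_n |a_n| r^n$ is too lossy; instead I would invoke the classical fact (Hardy–Littlewood) that for a power series with nonnegative-type control one has $\int_0^1 M_1(r,f)\,dr \lesssim \sum_n \frac{|a_n|}{n+1}$. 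Concretely, $\int_0^1 r^n\,dr=\frac{1}{n+1}$, so term-by-term integration of $\sum_n|a_n|r^n$ after using $M_1(r,f)\le \sum_n|a_n|r^n$ already gives $\|f\|_{A^1}\lesssim \sum_n \frac{|a_n|}{n+1}\asymp \sum_{n\ge1}\frac{|a_n|}{n}<\infty$. This is in fact the cleanest argument and avoids any subtlety; the $n=0$ term is harmless. So the first implication is routine.

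For the second implication, suppose $f\in A^1$. I would test against the functions $z\mapsto z^n$, i.e. recover the Taylor coefficients by $a_n=\frac{1}{r^n}\cdot\frac{1}{2\pi}\int_0^{2\pi}f(re^{i\theta})e^{-in\theta}\,d\theta$, giving $|a_n|\le r^{-n}M_1(r,f)\le r^{-n}M_1(1^-,f)$ — but this only yields boundedness of $|a_n|$, not the decay we want. The correct device is duality/pairing with the Bergman projection: since $f\in A^1$, for each $n$ we have $\langle f, z^n\rangle_{A^2}=\int_{\dd}f(z)\overline{z}^n\,dA(z)=\frac{a_n}{n+1}$, whence $\frac{|a_n|}{n+1}\le \int_{\dd}|f(z)|\,|z|^n\,dA(z)\le \|f\|_{A^1}$. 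This gives $|a_n|\lesssim (n+1)\|f\|_{A^1}$, still too weak. To get the summability $\sum n^{-2}|a_n|<\infty$ one must use that the $|z|^n$ are not merely bounded but concentrate near the boundary: more precisely I would use that $\sum_n \frac{|a_n|}{n+1} |z|^n$-type sums can be recovered by pairing $f$ against a single fixed $A^\infty$ (or Bloch) function. Indeed, the function $g(z)=\sum_n \frac{\overline{\operatorname{sgn}(a_n)}}{n+1}z^n$ — or, to be safe, $g_\epsilon(z)=\sum_n c_n z^n$ with $|c_n|\asymp \frac{1}{n+1}$ and signs chosen to align with $a_n$ — lies in the Bloch space $\B$ (since $\sum \frac{1}{n+1}|z|^n\asymp \log\frac{1}{1-|z|}$ and its derivative is $O((1-|z|)^{-1})$), and $A^1$ pairs with $\B$ under the integral pairing. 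Hence $\sum_n \frac{|a_n|}{(n+1)^2}=|\langle f,g\rangle|\lesssim \|f\|_{A^1}\|g\|_{\B}<\infty$, which is exactly the claim after reindexing.

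The main obstacle is the second implication: one has to identify the right test function realizing the duality $(A^1)^*\simeq \B$ (this is a known theorem, see the references \cite{hb3,34} cited in the paper for Bergman-space duality) and verify that $g$, built from the sign pattern of $(a_n)$ with coefficients $\asymp 1/(n+1)$, genuinely lies in $\B$ with controlled norm — the latter is the lacunary/Hadamard-gap computation $\|g\|_{\B}\asymp \sup_n |$ block sums $|\lesssim \sup_n 1 <\infty$, or more directly the estimate $(1-|z|^2)|g'(z)|\lesssim (1-|z|^2)\sum_n\frac{n}{n+1}|z|^{n-1}\lesssim 1$. Once the pairing $\langle f,g\rangle=\sum_n \frac{a_n \overline{c_n}}{n+1}$ is justified (via $r\to1^-$ and dominated convergence using $f\in A^1$ and $g$ bounded on each $\{|z|\le r\}$), the summability drops out. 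Everything else — the first implication and the coefficient/integral asymptotics — is standard and I would dispatch it in a few lines.
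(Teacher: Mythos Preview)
The paper does not prove this lemma; it simply cites \cite{hb3}, page 83. So there is no argument in the paper to compare against, only your own.

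Your proof is correct. For the first implication, note that you initially call the bound $M_1(r,f)\le \sum_n|a_n|r^n$ ``too lossy'' and then use exactly that bound two lines later; in fact it is not lossy at all here --- integrating $\sum_n|a_n|r^{n+1}$ over $[0,1)$ gives $\sum_n|a_n|/(n+2)$ directly, which is the desired estimate. For the second implication your duality idea is the right one and is exactly the machinery already invoked in the paper (the pairing $(A^1)^*\simeq\mathcal{B}$, referenced in \cite{34}). The test function $g(z)=\sum_n \overline{\operatorname{sgn}(a_n)}\,(n+1)^{-1}z^n$ does lie in $\mathcal{B}$ with norm $\lesssim 1$, by the computation you sketch: $(1-|z|^2)|g'(z)|\le(1-|z|^2)\sum_{n\ge1}|z|^{n-1}\le 2$. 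The cleanest way to avoid any worry about the limit $r\to1^-$ is to truncate: take $g_N(z)=\sum_{n\le N}\overline{\operatorname{sgn}(a_n)}\,(n+1)^{-1}z^n$, a polynomial with $\|g_N\|_{\mathcal{B}}\lesssim1$ uniformly in $N$, compute the (now elementary) pairing $\langle f,g_N\rangle=\sum_{n\le N}|a_n|/(n+1)^2$, bound it by $C\|f\|_{A^1}$, and let $N\to\infty$.
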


The following  lemma comes form  Theorem 3.5 in \cite{b7}.
\begin{lemma}\label{lm2.3}
Let $t>0$ and $f\in \hd$. If $s$ is a real parameter such that neither $1+s$ nor $1+s+t$ is a negative integer, then the following conditions are equivalent:
\\(a)\ $f\in \mathcal {B}$.
\\ (b)\ The function $(1-|z|^{2})^{t}R^{s,t}f(z)$ is bounded in $\dd$.

Moreover,
$$||f||_{\mathcal {B}}\asymp \sup_{z\in \dd} (1-|z|^{2})^{t}|R^{s,t}f(z)|.$$
\end{lemma}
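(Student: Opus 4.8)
This is the standard description of the Bloch space through fractional radial differentiation, and it can be quoted from Zhu's monographs or from the work of Ortega and F\'abrega on fractional derivatives; citing it is a legitimate option. For a self-contained argument I would proceed as follows. The engine is the integral representation of the fractional \emph{integral} operators coming from the Beta integral $\int_{0}^{1}r^{n+1+b}(1-r)^{u-1}\,dr=\Gamma(n+2+b)\Gamma(u)/\Gamma(n+2+b+u)$: for $u>0$ and $b>-2$ one gets
\[
R_{b,u}g(z)=\frac{\Gamma(2+b+u)}{\Gamma(2+b)\Gamma(u)}\int_{0}^{1}(1-r)^{u-1}r^{1+b}\,g(rz)\,dr ,
\]
and comparing Gamma quotients gives $R^{s,-u}=R_{s-u,u}$, so that, via the composition law $R^{s,a}R^{s+a,b}=R^{s,a+b}$, for any integer $N>t$ one has $R^{s,t}=R_{s+t-N,\,N-t}\circ R^{s+t-N,\,N}$. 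The same composition law lets one slide the first parameter into the range where the displayed formula applies, and checking that none of the Gamma factors occurring along the way degenerates is exactly where the hypothesis on $1+s$ and $1+s+t$ is used. Two elementary facts are invoked repeatedly: (i) if $|h(w)|\le M(1-|w|^{2})^{-c}$ on $\mathbb{D}$ then $|h'(w)|\lesssim M(1-|w|^{2})^{-c-1}$, by Cauchy's estimate on the disk of radius $\tfrac12(1-|w|)$ about $w$; and (ii) $\int_{0}^{1}(1-r)^{u-1}(1-r\rho)^{-c}\,dr\asymp(1-\rho)^{u-c}$ for $c>u>0$ and $0<\rho<1$, together with $1-r^{2}|z|^{2}\asymp 1-r|z|$.

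\emph{Proof that (b) implies (a).} Let $M=\sup_{z}(1-|z|^{2})^{t}|R^{s,t}f(z)|<\infty$ and write $f=R_{s,t}(R^{s,t}f)$. Applying the integral formula with $b=s$, $u=t$ and differentiating under the integral sign gives $f'(z)=c\int_{0}^{1}(1-r)^{t-1}r^{2+s}(R^{s,t}f)'(rz)\,dr$. Bounding $R^{s,t}f$ by $M(1-|w|^{2})^{-t}$ and, by (i), its derivative by $M(1-|w|^{2})^{-t-1}$, and then using (ii) with $u=t$, $c=t+1$, we obtain $(1-|z|^{2})|f'(z)|\lesssim M$; since $|f(0)|\lesssim M$ is immediate from the formula, $\|f\|_{\mathcal{B}}\lesssim M$.

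\emph{Proof that (a) implies (b).} Fix an integer $N>t$ and set $h=R^{s+t-N,N}f$. For integer $N$, $R^{s',N}f$ is a finite linear combination of $z^{j}f^{(j)}$, $0\le j\le N$, because its Taylor multiplier is a polynomial of degree $N$ in $n$ and $z^{j}f^{(j)}$ has multiplier $n(n-1)\cdots(n-j+1)$; hence the standard Bloch estimates $(1-|z|^{2})^{j}|f^{(j)}(z)|\lesssim\|f\|_{\mathcal{B}}$ for $1\le j\le N$ and $|f(z)|\lesssim\|f\|_{\mathcal{B}}\log\frac{e}{1-|z|}$ give $(1-|z|^{2})^{N}|h(z)|\lesssim\|f\|_{\mathcal{B}}$. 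Now $R^{s,t}f=R_{s+t-N,\,N-t}h$, and the integral formula with $u=N-t$ plus (ii) with $c=N$ yield
\[
(1-|z|^{2})^{t}|R^{s,t}f(z)|\lesssim\|f\|_{\mathcal{B}}\,(1-|z|^{2})^{t}\int_{0}^{1}\frac{(1-r)^{N-t-1}}{(1-r|z|)^{N}}\,dr\asymp\|f\|_{\mathcal{B}} .
\]
Combined with the previous paragraph this also gives the norm equivalence.

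The only genuinely delicate point is the bookkeeping for arbitrary real $s$: the clean integral formula requires the first parameter of $R_{b,u}$ to lie in a fixed range, so for very negative $s$ one must use the composition law to reduce to that case while verifying that the auxiliary Gamma factors remain finite and nonzero — which is precisely what the non‑integrality hypotheses on $1+s$ and $1+s+t$ guarantee (after, if necessary, adjusting the auxiliary integer $N$). Everything else reduces to applying the two routine estimates (i) and (ii) twice.
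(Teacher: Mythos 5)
The paper offers no proof of this lemma at all: it is quoted from \cite[Theorem 3.5]{b7}, so your remark that citing is a legitimate option is exactly what the authors do, and your self-contained sketch is extra material. The sketch follows the standard route behind that citation: the Beta-integral representation of the fractional integrals $R_{b,u}$, the composition law for the multipliers, Cauchy's estimate, and the estimate $\int_0^1(1-r)^{u-1}(1-r\rho)^{-c}\,dr\asymp(1-\rho)^{u-c}$. The two main implications are argued correctly for generic parameters, and both the factorization $R^{s,t}=R_{s+t-N,\,N-t}\circ R^{s+t-N,\,N}$ and the identification of $R^{s',N}$ (integer $N$) with a combination of $z^{j}f^{(j)}$, $0\le j\le N$, check out at the level of Taylor multipliers.

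The genuine weak point is the step you defer to ``bookkeeping'': it is not true that the hypotheses on $1+s$ and $1+s+t$ plus ``adjusting the auxiliary integer $N$'' always save the reduction. The hypotheses only forbid $1+s$ and $1+s+t$ from being negative integers; they do not prevent $1+s+t-N$ from being one. Take $s=-3/2$, $t=5/2$: then $1+s=-1/2$ and $1+s+t=2$ are admissible, but every integer $N>t$ (i.e.\ $N\ge 3$) makes $1+(s+t-N)=2-N$ a negative integer, so the intermediate operator $R^{s+t-N,N}$, whose normalizing constant involves $\Gamma(2+s+t-N)$ at a pole, is degenerate and your factorization is unavailable for any admissible choice of $N$. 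Likewise, in the (b)$\Rightarrow$(a) direction the displayed formula for $R_{s,t}$ needs $s>-2$ (integrability of $r^{1+s}$ at $0$), which the hypotheses do not guarantee. These cases are repairable by a further device --- e.g.\ split off a Taylor polynomial of $f$ so that only tail multipliers matter and all term-by-term Beta integrals converge, or use the kernel representation of $R^{s,t}$ as in Zhu's own proof --- but as written the reduction would fail for admissible parameters, so either add that device or simply cite \cite[Theorem 3.5]{b7}, as the paper does.
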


The   following integral  estimates follows from  the  Proposition 1.4.10 in \cite{b3}.
\begin{lemma}\label{lm2.4}
 Let  $t$ be real and $\delta>-1$, $ z\in \dd$.   Then
the integral
$$
 J(z)=\int_{\dd}\frac{(1-|u|^{2})^{\delta}dA(u)}{|1-\overline{z}u|^{2+t+\delta}}\ \
$$
have the following asymptotic properties.
\\ (1) \ If  $t<0$, then $ J(z)\asymp 1$.
\\ (2) \ If  $t=0$, then $\displaystyle{J(z) \asymp
\log\frac{e}{1-|z|^{2}}}$.
\\ (3) \ If  $t>0$, then $  J(z) \asymp
 (1-|z|^{2})^{-t}$.
\end{lemma}

\begin{lemma}\label{lm2.5}
Let $\mu$ be a positive Borel measure on $ [0, 1)$, $\beta>0$, $\gamma>0$. Let $\tau $ be the Borel measure on $[0, 1)$ defined by
$$d\tau(t)=\frac{d\mu(t)}{(1-t)^{\gamma}}.$$
Then, the following statements are equivalent.
\\ (a)\ $\mu$ is a $\beta+\gamma$-Carleson measure.
\\(b)\ $\tau$ is a $\beta$-Carleson measure.
\end{lemma}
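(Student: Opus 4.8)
The plan is to reduce both implications to the elementary description of Carleson measures carried by the radius: a positive Borel measure $\nu$ on $[0,1)$ is an $s$-Carleson measure if and only if $\nu([t,1))\lesssim (1-t)^{s}$ for all $0\le t<1$. This is exactly the form used in the definition of (logarithmic) $s$-Carleson measure in the Introduction, and it holds because a measure supported on the segment $[0,1)$ only charges Carleson boxes that are comparable to a box centred at the point $1$, whose intersection with $[0,1)$ is an interval $[t,1)$ with $1-t$ comparable to the side length. With this reformulation the lemma becomes a pair of estimates relating the tails $\mu([t,1))$ and $\tau([t,1))$.

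For $(b)\Rightarrow(a)$ the estimate is immediate. If $\tau([t,1))\le C(1-t)^{\beta}$, then since $(1-s)^{\gamma}\le (1-t)^{\gamma}$ whenever $t\le s<1$ (this is where $\gamma>0$ is used) we obtain
\[
\mu([t,1))=\int_{[t,1)}(1-s)^{\gamma}\,d\tau(s)\le (1-t)^{\gamma}\,\tau([t,1))\le C\,(1-t)^{\beta+\gamma},
\]
so $\mu$ is a $(\beta+\gamma)$-Carleson measure.

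For $(a)\Rightarrow(b)$ I would split the tail dyadically. Fix $0\le t<1$ and write $[t,1)=\bigcup_{n\ge 0}I_{n}$ where $I_{n}=\big[\,1-(1-t)2^{-n},\ 1-(1-t)2^{-n-1}\big)$. On $I_{n}$ we have $1-s\asymp (1-t)2^{-n}$, and by the hypothesis $\mu(I_{n})\le \mu\big(\big[\,1-(1-t)2^{-n},1\big)\big)\le C\big((1-t)2^{-n}\big)^{\beta+\gamma}$. Therefore
\[
\tau([t,1))=\int_{[t,1)}\frac{d\mu(s)}{(1-s)^{\gamma}}\asymp\sum_{n\ge 0}\big((1-t)2^{-n}\big)^{-\gamma}\mu(I_{n})\lesssim (1-t)^{\beta}\sum_{n\ge 0}2^{-n\beta},
\]
and the geometric series on the right converges precisely because $\beta>0$; hence $\tau([t,1))\lesssim (1-t)^{\beta}$, i.e.\ $\tau$ is a $\beta$-Carleson measure.

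The statement is thus easy once the radial reformulation is in place; the only step requiring a little care is $(a)\Rightarrow(b)$, and the one genuine ingredient there is the convergence of $\sum_{n}2^{-n\beta}$, which is exactly the role of the hypothesis $\beta>0$. An alternative for this direction is an integration-by-parts argument expressing $\tau([t,1))$ through $\int_{t}^{1}\mu([s,1))(1-s)^{-\gamma-1}\,ds$, but the dyadic splitting is cleaner since it avoids discussing boundary terms and the regularity of the distribution function $s\mapsto\mu([s,1))$.
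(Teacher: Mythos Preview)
Your proof is correct. The implication $(b)\Rightarrow(a)$ is handled in essentially the same way as in the paper (both exploit that $(1-s)^{-\gamma}$ is increasing, so the minimum of $(1-s)^{-\gamma}$ on $[t,1)$ occurs at $s=t$). The difference lies in $(a)\Rightarrow(b)$: the paper proceeds by integration by parts, writing
\[
\tau([t,1))=\frac{\mu([t,1))}{(1-t)^{\gamma}}-\lim_{x\to 1}\frac{\mu([x,1))}{(1-x)^{\gamma}}+\gamma\int_{t}^{1}\frac{\mu([x,1))}{(1-x)^{\gamma+1}}\,dx,
\]
and then invoking the Carleson hypothesis to see that the boundary term at $x=1$ vanishes and that the remaining integral is $\lesssim(1-t)^{\beta}$. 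Your dyadic decomposition is a genuinely different route: it avoids any discussion of boundary terms or of the regularity of the distribution function $x\mapsto\mu([x,1))$, and it makes the role of the hypothesis $\beta>0$ completely transparent through the convergence of $\sum_{n}2^{-n\beta}$. The integration-by-parts version is perhaps more informative as an identity (and generalises naturally to the logarithmic-Carleson setting used elsewhere in the paper), but your argument is the cleaner of the two for this particular lemma. Your closing remark anticipating the paper's method is accurate.
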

\begin{proof}

 $(b) \Rightarrow (a)$. Assume (b).  Then there exists a positive constant $C>0$ such that
$$\int_{t}^{1}\frac{d\mu(r)}{(1-r)^{\gamma}}\leq C(1-t)^{\beta},\ \ t\in[0,1).$$
Using this and the fact that the function $x \mapsto \frac{1}{(1-x)^{\gamma}}$ is increasing in $[0, 1)$, we
obtain
$$\frac{\mu([t,1))}{(1-t)^{\gamma}}\leq \int_{t}^{1}\frac{d\mu(r)}{(1-r)^{\gamma}}\leq C(1-t)^{\beta},\ \ t\in[0,1).$$
This shows that  $\mu$ is a $\beta+\gamma$-Carleson measure.

$(a) \Rightarrow (b)$. Assume (a). Then there exists a positive constant $C>0$ such that
$$\mu(t)\leq C(1-t)^{\beta+\gamma},\ \ t\in[0,1).$$

 Integrating by
parts and using the above inequality, we obtain
\[ \begin{split}
&\tau([t,1))=\int_{t}^{1}\frac{d\mu(x)}{(1-x)^{\gamma}}\\
&= \frac{1}{(1-t)^{\gamma}}\mu([t,1))-\lim_{x\rightarrow 1}\frac{1}{(1-x)^{\gamma}}\mu([x,1))+\gamma\int_{t}^{1}\frac{\mu([x,1))}{(1-x)^{\gamma+1}}dx\\
&= \frac{1}{(1-t)^{\gamma}}\mu([t,1))+\gamma\int_{t}^{1}\frac{\mu([x,1))}{(1-x)^{\gamma+1}}dx\\
& \lesssim (1-t)^{\beta}+\int_{t}^{1} (1-x)^{\beta-1}dx\lesssim  (1-t)^{\beta}.
  \end{split} \]
  Thus,  $\tau$ is a $\beta$-Carleson measure.
\end{proof}

The following lemma is useful for dealing with the compactness. It is a consequence of Theorem 3.1 in \cite{stt}.
\begin{lemma}\label{lm2.6}
Suppose that $T$ is a bounded operator from  $\ap$ into $Y$($Y=\mathcal {B} $ or $\ap$). Then $T$  is compact operator from $\ap$ into $Y$ if and only if  for any bounded sequence $\{h_{n}\}$ in $X$ which converges to $0$ uniformly on every compact subset of $\mathbb{D}$, we have
$\lim_{n\rightarrow \infty}||T(h_{n})|| _{Y}=0.$
\end{lemma}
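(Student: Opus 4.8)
The statement is the standard sequential description of compactness for operators between spaces of analytic functions that carry bounded point evaluations, and the plan is the usual normal‑families argument. The first step is to record the two elementary growth estimates that make everything run: if $f\in\mathcal{B}$ then $|f(z)-f(0)|\lesssim \|f\|_{\mathcal{B}}\log\frac{2}{1-|z|^{2}}$, and if $f\in A^{p}$ then $|f(z)|\lesssim (1-|z|^{2})^{-2/p}\|f\|_{A^{p}}$. Consequently, for each of the spaces $X,Y\in\{\mathcal{B},A^{p}\}$ the point evaluations $f\mapsto f(z)$ are bounded, every norm‑bounded subset is a normal family, and norm convergence in $X$ (or in $Y$) forces uniform convergence on compact subsets of $\mathbb{D}$. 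Moreover the norm of $X$ is lower semicontinuous with respect to uniform convergence on compacta (Fatou's lemma for $A^{p}$, and a direct estimate on $f'$ for $\mathcal{B}$), so the closed ball of $X$ is compact and metrizable in that topology.

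For sufficiency, assume the stated sequential condition and let $\{f_{n}\}$ lie in the closed unit ball of $X$. By the normal‑families remark some subsequence $\{f_{n_{k}}\}$ converges uniformly on compacta to a function $f\in H(\mathbb{D})$, and lower semicontinuity gives $f\in X$ with $\|f\|_{X}\le 1$. Then $\{f_{n_{k}}-f\}$ is bounded in $X$ and tends to $0$ uniformly on compacta, so the hypothesis yields $\|T(f_{n_{k}}-f)\|_{Y}\to 0$, i.e. $Tf_{n_{k}}\to Tf$ in $Y$. Hence the closure of the image of the unit ball is compact in $Y$, so $T$ is a compact operator.

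For necessity, assume $T$ is compact and let $\{h_{n}\}$ be bounded in $X$ with $h_{n}\to 0$ uniformly on compacta; it suffices to show that every subsequence of $\{Th_{n}\}$ has a further subsequence that is norm‑null in $Y$. Given a subsequence, compactness of $T$ produces a further subsequence $\{h_{n_{j}}\}$ with $Th_{n_{j}}\to g$ in $Y$; by the first paragraph this convergence is also uniform on compacta, so $(Th_{n_{j}})(z)\to g(z)$ for each $z\in\mathbb{D}$. On the other hand, for fixed $z$ the map $\ell_{z}\colon f\mapsto (Tf)(z)$ is a bounded linear functional on $X$, being the composition of the bounded operator $T$ with the bounded point evaluation on $Y$, and such functionals respect bounded limits in the topology of uniform convergence on compacta: bounded functionals on $A^{p}$ ($p\ge 1$) and on $\mathcal{B}$ are given by integration against a fixed kernel in $A^{p'}$ (respectively, by the standard pairing with $A^{1}$), so $\ell_{z}(h_{n_{j}})\to 0$ follows from $\sup_{j}\|h_{n_{j}}\|_{X}<\infty$ together with dominated convergence. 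Therefore $g(z)=0$ for every $z$, hence $g\equiv 0$ and $\|Th_{n_{j}}\|_{Y}\to 0$, as required.

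The only real obstacle is this last point: one must guarantee that the pointwise limit $g$ vanishes, i.e. that $\ell_{z}=\delta_{z}\circ T$ is continuous along norm‑bounded sequences converging uniformly on compacta. For the spaces in question this rests on the explicit description of their duals; in every application of this lemma in the paper, where $T=\ii_{\mu_{\alpha+1}}$ is given by the explicit integral $\ii_{\mu_{\alpha+1}}(f)(z)=\I \frac{f(t)}{(1-tz)^{\alpha+1}}\,d\mu(t)$ against a finite measure $\mu$, the statement is even more transparent, since for fixed $z$ this is handled directly by dominated convergence once one uses that $\mu$ is a Carleson measure of the relevant order. \hfill$\square$
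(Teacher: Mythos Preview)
Your sufficiency argument is the standard normal-families proof and is correct; the paper itself gives no argument at all, merely citing Proposition~3.11 of \cite{hb1} and omitting the details, so there is nothing to compare on that half.

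The necessity argument has a real gap, and in fact the lemma as stated is \emph{false} for $X=\mathcal{B}$ and for $X=A^{1}$. Your claim that bounded linear functionals on $\mathcal{B}$ are given by the pairing with $A^{1}$ is incorrect: $A^{1}$ is only the \emph{predual} of $\mathcal{B}$, and $\mathcal{B}^{*}\supsetneq A^{1}$ since $\mathcal{B}$ is non-reflexive. A concrete obstruction already at the level of $X=A^{1}$: take $g(z)=\sum_{k\ge 0}z^{2^{k}}\in\mathcal{B}\setminus\mathcal{B}_{0}$ and $h_{n}(z)=2^{n}z^{2^{n}}$; then $\sup_{n}\|h_{n}\|_{A^{1}}<\infty$ and $h_{n}\to 0$ uniformly on compacta, yet $\langle h_{n},g\rangle\asymp 1$, so the rank-one operator $Tf=\langle f,g\rangle\cdot y_{0}$ is compact from $A^{1}$ into $Y$ but does not send this bounded $\tau$-null sequence to $0$. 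The same phenomenon occurs for $X=\mathcal{B}$ via any $\phi\in\mathcal{B}^{*}\setminus A^{1}$, because on bounded subsets of $\mathcal{B}$ the topology of local uniform convergence coincides with the weak-$*$ topology. The cited Proposition~3.11 in \cite{hb1} is about composition operators, for which $(Th_{n})(z)=h_{n}(\varphi(z))\to 0$ is automatic and this issue never surfaces. You correctly observe that for $T=\ii_{\mu_{\alpha+1}}$ the pointwise convergence $(Th_{n})(z)\to 0$ is immediate by dominated convergence, and that is all the paper ever uses; the honest fix is to add the hypothesis that $T$ is $\tau$-to-$\tau$ continuous on bounded sets (which $\ii_{\mu_{\alpha+1}}$ plainly is). For $X=A^{p}$ with $1<p<\infty$ your duality route \emph{does} work, since then $A^{p}$ is reflexive and polynomials are dense in $A^{p'}$, so bounded $\tau$-null sequences are weakly null and compact operators send them to norm-null sequences; but you do not cover $0<p\le 1$ and cannot, for the reason above.
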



\section{Generalized Hilbert operator acting  between  Bergman spaces   }
Let $0<p<\infty$, for each $f\in A^{p}$, it is well known that
\begin{equation}\label{eq4.1}
|f(z)|\lesssim\frac{||f||_{A^{p}}}{(1-|z|)^{\frac{2}{p}}}\ \ \mbox{for all}\ \ z\in \dd.
 \end{equation}

The following lemma provide a  sufficient condition for the operator  $\hur$ to be well defined on the Bergman space
$A^{p}$. For a proof, see  \cite[Theorem 3.3]{y2} or \cite[Theorem 2.1]{d1}.
\begin{lemma}\label{lm3.1}
Suppose $0<p<\infty$ and $\alpha>-1$. Let $\mu$ be a positive Borel measure on $[0, 1)$. Then  $\hur (f)$ is a well defined analytic function in $\dd$ for every $f\in \ap$ in any of the three following cases.
\\
(i) $\mu$ is a $\frac{2}{p}$ Carleson measure if $0<p\leq 1$.
\\
(ii) $\mu$ is a $\left(\frac{2-(p-1)^{2}}{p}\right)$ Carleson measure if $1\leq p\leq 2$.
\\
(iii) $\mu$ is a $\frac{1}{p}$ Carleson measure if $2<p<\infty$.
\\
Furthermore, in such cases we have that
$$\hur (f)=\int_{0}^{1}\frac{f(t)}{(1-tz)^{\alpha+1}}d\mu(t), \ z\in \dd, \  f\in \ap.$$
\end{lemma}

We start considering the case  $\alpha>-1$, $0<p\leq 1$ and $q=1$. We have the following result.
\begin{thm}\label{th3.2}
Suppose that $0<p\leq 1$  and  $\alpha>-1$, let  $\mu$ be a positive Borel measure on $[0, 1)$ which satisfies the conditions of Lemma \ref{lm3.1}. Then the following statements hold:\\
(a) If $\alpha> 1$, then  $\hur:A^{p}\rightarrow A^{1}$ is bounded if and only if $\mu$ is a $\frac{2}{p}+\alpha-1$ Carleson measure.
\\ (b) If  $-1<\alpha<1$, then  $\hur:A^{p}\rightarrow A^{1}$ is bounded if and only if $\mu$ is a $\frac{2}{p}$ Carleson measure.
\end{thm}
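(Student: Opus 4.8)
The plan is to prove both implications by reducing, via Fubini's theorem together with Lemma 2.7, to a Carleson-type condition on $\mu$ (or on a modified measure), and to treat $\alpha>1$ and $-1<\alpha<1$ separately: by Lemma 2.7 with $\delta=0$ the inner integral $\Id|1-tz|^{-(\alpha+1)}\,dA(z)$ is comparable to $(1-t)^{1-\alpha}$ when $\alpha>1$ but to a constant when $-1<\alpha<1$, and this single dichotomy is exactly what forces the two different exponents in (a) and (b).

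For the sufficiency, apply Fubini to obtain
$$\|\ii_{\mu_{\alpha+1}}(f)\|_{A^1}\le\Id\I\frac{|f(t)|}{|1-tz|^{\alpha+1}}\,d\mu(t)\,dA(z)=\I|f(t)|\left(\Id\frac{dA(z)}{|1-tz|^{\alpha+1}}\right)d\mu(t).$$
If $\alpha>1$, Lemma 2.7 bounds this by $\I|f(t)|\,d\nu(t)$ with $d\nu(t)=d\mu(t)/(1-t)^{\alpha-1}$, and Lemma 2.5 (with $\gamma=\alpha-1>0$, $\beta=\frac{2}{p}$) shows $\nu$ is a $\frac{2}{p}$-Carleson measure exactly when $\mu$ is a $\left(\frac{2}{p}+\alpha-1\right)$-Carleson measure; if $-1<\alpha<1$ the inner integral is $\asymp1$, so $\|\ii_{\mu_{\alpha+1}}(f)\|_{A^1}\lesssim\I|f(t)|\,d\mu(t)$ and we take $\nu=\mu$, a $\frac{2}{p}$-Carleson measure by hypothesis. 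In either case it remains to prove $\I|f(t)|\,d\nu(t)\lesssim\|f\|_{A^p}$ for $f\in A^p$, $0<p\le1$, and $\nu$ a $\frac{2}{p}$-Carleson measure on $[0,1)$; this is the standard embedding $A^p\hookrightarrow L^1(\nu)$ of a Bergman space into $L^1$ of a Carleson measure, which also guarantees that $\ii_{\mu_{\alpha+1}}$ is well defined on $A^p$ (note that $\mu$ is a $\frac{2}{p}$-Carleson measure under either hypothesis). Equivalently, one may expand $f$ through the atomic decomposition of $A^p$, $0<p\le1$, and verify $\sup_a\|\ii_{\mu_{\alpha+1}}(f_a)\|_{A^1}<\infty$ on the atoms $f_a(z)=(1-|a|^2)^b(1-\overline{a}z)^{-b-2/p}$, $a\in\dd$, using $\int_{[0,1)}|1-\overline{a}t|^{-b-2/p}\,d\nu(t)\lesssim(1-|a|^2)^{-b}$, valid for $b>0$.

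For the necessity, use the test functions $f_a(z)=\frac{(1-a^2)^b}{(1-az)^{b+2/p}}$, $a\in(0,1)$, with $b>0$ fixed; Lemma 2.7 gives $\|f_a\|_{A^p}\asymp1$. Since $f_a>0$ on $[0,1)$ and every kernel $(1-at)^{-c}$ arising is positive, we bound the $t$-integrals from below by restricting to $[a,1)$, on which $1-at\le1-a^2$. When $-1<\alpha<1$, evaluate at the origin: $\ii_{\mu_{\alpha+1}}(f_a)(0)=\I f_a(t)\,d\mu(t)$, and since $|F(0)|\le\|F\|_{A^1}$,
$$\|\ii_{\mu_{\alpha+1}}(f_a)\|_{A^1}\ge\I f_a(t)\,d\mu(t)\ge(1-a^2)^b\frac{\mu([a,1))}{(1-a^2)^{b+2/p}}=\frac{\mu([a,1))}{(1-a^2)^{2/p}};$$
since the left side is $\lesssim\|f_a\|_{A^p}\asymp1$, this forces $\mu([a,1))\lesssim(1-a)^{2/p}$. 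When $\alpha>1$, evaluate at $z=a$ and use $(4.1)$ with $p=1$, i.e. $\|F\|_{A^1}\gtrsim(1-|z|)^2|F(z)|$:
$$\|\ii_{\mu_{\alpha+1}}(f_a)\|_{A^1}\gtrsim(1-a)^2\,\ii_{\mu_{\alpha+1}}(f_a)(a)\gtrsim(1-a)^2(1-a^2)^b\frac{\mu([a,1))}{(1-a^2)^{b+2/p+\alpha+1}}\asymp\frac{\mu([a,1))}{(1-a)^{2/p+\alpha-1}},$$
which forces $\mu([a,1))\lesssim(1-a)^{2/p+\alpha-1}$. Both cases are thus settled.

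I expect the main obstacle to be that the two cases cannot be merged on the necessity side: when $\alpha>1$ the kernel is singular enough that testing $\ii_{\mu_{\alpha+1}}(f_a)$ at the interior point $a$ (through the pointwise $A^1$ bound) already produces the sharp exponent $\frac{2}{p}+\alpha-1$, whereas when $-1<\alpha<1$ that same computation yields only the weaker exponent $\frac{2}{p}+\alpha-1<\frac{2}{p}$, and one is forced to use the coarser value $\ii_{\mu_{\alpha+1}}(f_a)(0)$, which turns out to be sharp precisely because the kernel is harmless there. On the sufficiency side, the one step requiring more than $(4.1)$ is the inequality $\I|f|\,d\nu\lesssim\|f\|_{A^p}$: the crude bound $|f(t)|\lesssim\|f\|_{A^p}(1-t)^{-2/p}$ makes the resulting integral diverge logarithmically at the endpoint, so one must appeal to the full Carleson-measure embedding (or the atomic decomposition) for $A^p$ with $0<p\le1$.
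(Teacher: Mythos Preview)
Your argument is correct, and in fact more direct than the paper's on both sides. For sufficiency in case~(a), the paper does not use the Fubini estimate you wrote but instead passes through the duality $(\mathcal{B}_0)^\star\simeq A^1$: it computes $\langle \ii_{\mu_{\alpha+1}}(f),g\rangle=\int_0^1 R^{0,\alpha-1}g(t)\overline{f(t)}\,d\mu(t)$ and then invokes the Bloch characterization $(1-|z|^2)^{\alpha-1}|R^{0,\alpha-1}g(z)|\lesssim\|g\|_{\mathcal{B}}$ (the paper's Lemma~2.3) together with the same Carleson embedding $\int|f|\,d\nu\lesssim\|f\|_{A^p}$ that you use. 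Your route bypasses duality and fractional derivatives entirely, which is cleaner; the paper's route, on the other hand, identifies a bilinear form that is reused elsewhere (Theorems~3.1, 3.5, 4.8). For necessity the paper takes a rather different path: it expands $\ii_{\mu_{\alpha+1}}(f_b)$ in a Taylor series and applies the coefficient criterion $f\in A^1\Rightarrow\sum n^{-2}|a_n|<\infty$ (Lemma~2.2), then sums the resulting double series via Stirling and the dichotomy $\sum n^{\alpha-2}b^n\asymp(1-b)^{1-\alpha}$ versus $\asymp 1$. Your pointwise-evaluation argument (at $z=a$ when $\alpha>1$, at $z=0$ when $-1<\alpha<1$) is shorter and avoids series manipulations; the paper's coefficient method has the minor advantage of handling both cases in a single computation before splitting on the behaviour of $\sum n^{\alpha-2}b^n$. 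Note that your lemma numbers are shifted by one relative to the paper: what you call Lemma~2.7 is the paper's Lemma~2.6, and your Lemma~2.5 is the paper's Lemma~2.4.
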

\begin{proof}
   (a).    If $\alpha>1$ and $\hur:A^{p}\rightarrow A^{1}$ is bounded. Let
$$f_{b}(z)=\frac{(1-b^{2})^{\frac{2}{p}}}{(1-bz)^{\frac{4}{p}}}, \ \ \ \  b\in(\frac{1}{2},1), \ z\in\dd.$$
Then it is easy to check that $f_{b}\in A^{p}$ and $\sup_{\frac{1}{2}<b<1}||f_{b}||_{A^{p}}\lesssim 1$.
Since $\hur(f_{b})\in A^{1}$, Lemma \ref{lm2.2} yields
$$\sum_{n=1}^{\infty}n^{-2}\frac{\Gamma(n+1+\alpha)}{\Gamma(n+1)\Gamma(\alpha+1)} \left|\I t^{n}f_{b}(t)d\mu(t)\right|\lesssim||\hur(f_{b})||_{A^{1}}\lesssim ||f_{b}||_{A^{p}} \lesssim 1 .$$
It is clear  that
 $$f_{b}(z)=\sk a_{k,b}z^{k}, \ \ \mbox{where}\ a_{k,b}=(1-b^{2})^{\frac{2}{p}}\frac{\Gamma(k+\frac{4}{p})b^{k}}{\Gamma(k+1)\Gamma(\frac{4}{p})},$$
and
   $$\sum_{n=1}^{\infty}n^{\alpha-2}b^{n} \asymp \frac{1}{(1-b)^{\alpha-1}}, \ \ \mbox{whenever}\   \alpha>1,\ b\in (\frac{1}{2},1).$$
 This together with Stirling's  formula imply that
\[ \begin{split}
1 &\gtrsim \sum_{n=1}^{\infty}n^{-2}\frac{\Gamma(n+1+\alpha)}{\Gamma(n+1)\Gamma(\alpha+1)} \left|\I t^{n}f_{b}(t)d\mu(t)\right|\\
&\gtrsim (1-b^{2})^{\frac{2}{p}} \sum_{n=1}^{\infty}n^{\alpha-2}\sk \frac{\Gamma(k+\frac{4}{p})b^{k}}{\Gamma(k+1)\Gamma(\frac{4}{p})}\int_{b}^{1}t^{n+k}d\mu(t)\\
&\gtrsim (1-b^{2})^{\frac{2}{p}}  \mu([b,1))\sum_{n=1}^{\infty}n^{\alpha-2}b^{n}\left(\sk \frac{\Gamma(k+\frac{4}{p})b^{2k}}{\Gamma(k+1)\Gamma(\frac{4}{p})}\right)\\
&\asymp  (1-b^{2})^{\frac{2}{p}}  \mu([b,1)) \frac{1}{(1-b^{2})^{\frac{4}{p}+\alpha-1}}\\
& = \frac{\mu([b,1))}{  (1-b^{2})^{\frac{2}{p}+\alpha-1}}.
  \end{split} \]
  Consequently, $\mu([b,1))\lesssim  (1-b^{2})^{\frac{2}{p}+\alpha-1}$ for all $\frac{1}{2}<b<1$. This shows that $\mu$ is a $\frac{2}{p}+\alpha-1$-Carleson measure.

    On the other hand, if  $\mu$ is a $\frac{2}{p}+\alpha-1$ Carleson measure. Then  $\displaystyle{\frac{d\mu(t)}{(1-t)^{\alpha-1}}}$ is a $\frac{2}{p}$-Carleson measure by Lemma \ref{lm2.5}. The assumption  of  the  measure $\mu$ insure that $\hur (f)=\iu (f)$ for every $f\in \ap$.  The well known Carleson embedding theorem for $A^{p}$ shows that
    \begin{equation}\label{eq4.2}
    \I |f(t)|\frac{d\mu(t)}{(1-t)^{\alpha-1}} \lesssim ||f||_{A^{p}} \ \ \mbox{for all } \ f\in A^{p},  0<p\leq 1. \end{equation}
   For each  $0\leq r<1$, $f\in \ap$ and $g\in \mathcal {B}_{o}$, we have
 \[ \begin{split}
   &  \ \ \ \ \  \int_{\mathbb{D}}\int_{[0,1)}\left| \frac{f(s)g(rz)}{(1-rsz)^{\alpha+1}}
\right| d\mu(s)dA(z)\\
&\leq \frac{1}{(1-r)^{2}}\int_{[0,1)}\frac{| f(s)|}{(1-s)^{\alpha-1}} d\mu(s)\int_{\mathbb{D}}|g(rz)| dA(z)\\
&\lesssim \frac{|| f||_{\ap}||g||_{\mathcal {B}}}{(1-r)^{2}}\int_{\mathbb{D}}\log\frac{e}{1-|z|}dA(z)\\
&\leq \frac{|| f||_{\mathcal {B}}||g||_{\mathcal {B}}}{(1-r)^{2}}<\infty,
 \end{split} \]
Let $g(z)=\sum_{n=0}^{\infty}b_{n}z^{n}$.  Then by Fubini's theorem and  a simple calculation through polar coordinate, we have that
 \[ \begin{split}
 &\ \ \ \ \Id \overline{\ii_{\mu_{\alpha+1}}(f)(rz)}g(rz)dA(z)\\
 &   = \I \sn \frac{\Gamma(n+1+\alpha)}{\Gamma(n+2)\Gamma(\alpha+1)}b_{n}(r^{2}t)^{n}\overline{f(t)}d\mu(t)\\
 &  =\I R^{0,\alpha-1} g(r^{2}t)  \overline{f(t)}d\mu(t).  \end{split} \]
    for all $0\leq r<1$, $f\in \ap$, $g\in \mathcal {B}_{0}$.


    For each  $0\leq r<1$, $f\in A^{p}$ and $g\in \mathcal {B}_{0}$, using Lemma \ref{lm2.3} and  (\ref{eq4.2}) we have
       \[ \begin{split}
     &\ \ \ \  \left| \Id \overline{\ii_{\mu_{\alpha+1}}(f)(rz)}g(rz)dA(z)\right|=\left|\I R^{0,\alpha-1} g(r^{2}t)  \overline{f(t)}d\mu(t)\right|\\
      & \lesssim  ||g||_{\mathcal {B}}\I |f(t)|\frac{d\mu(t)}{(1-r^{2}t)^{\alpha-1}}   \lesssim  ||g||_{\mathcal {B}}\I |f(t)|\frac{d\mu(t)}{(1-t)^{\alpha-1}} \\
     &  \lesssim ||f||_{A^{p}}||g||_{\mathcal {B}}.
                \end{split} \]
      Thus
     \begin{equation}\label{eq4.3}
     \lim_{r\rightarrow 1^{-}}\left| \Id \overline{\ii_{\mu_{\alpha+1}}(f)(rz)}g(rz)dA(z)\right|\lesssim ||f||_{A^{p}}||g||_{\mathcal {B}} .\end{equation}
    Since $(\mathcal {B}_{0})^{\star}\simeq A_{1}$  under the pairing
     $$\langle F, G\rangle=\lim_{r\rightarrow 1^{-}} \Id F(rz)\overline{G(rz)}dA(z), \ \ F\in \mathcal {B}_{0},  G\in A_{1}  .$$
      This together with (\ref{eq4.3}) imply that $\hur$  is a bounded operator  from $A^{p}$ into $A^{1}$.

      (b).  If $-1<\alpha<1$, then it is obvious that
      $$\sum_{n=1}^{\infty}n^{\alpha-2}b^{n} \asymp 1 \ \ \mbox{whenever}\ b\in (\frac{1}{2},1).$$
      Arguing as in the proof of  (a), we will obtain the necessity.

   On the other hand, suppose that $\mu$ is a $\frac{2}{p}$ Carleson measure, then  $\hur(f)$ is well defined  analytic function for each $f\in \ap$ by Lemma \ref{lm3.1}. Moreover, the Carleson embedding for $\ap$ implies that
$$\I |f(t)|d\mu(t)\lesssim \fap \ \mbox{for all}\  f\in \ap ,\ 0<p\leq 1 . $$
 Using Fubini's theorem and Lemma \ref{lm2.4}, we have
 \[ \begin{split}
||\hur(f)||_{A^{1}}&=\int_{\dd}\left|\int_{0}^{1}\frac{f(t)}{(1-tz)^{\alpha+1}}d\mu(t)
\right|dA(z)\\
 &\leq  \I |f(t)|\int_{\dd}\frac{dA(z)}{|1-tz|^{\alpha+1}}d\mu(t)\\
&\lesssim \I |f(t)|d\mu(t)\lesssim ||f||_{A^{p}}.
     \end{split} \]
     This implies that   $\hur:A^{p}\rightarrow A^{1}$ is bounded.\end{proof}

\begin{re}
For $\alpha=0$ and $p=1$, $\hu$ is bounded on $A^{1}$ if and only if $\mu$ is a  $2$-Carleson measure. This also was proved in \cite[Theorem 3]{g1}.
For $\alpha=1$, then $\mathcal {H}_{\mu}^{1}:A^{p}\rightarrow A^{1}$ is bounded if and only if $\mu$ is a 1-logarithmic $\frac{2}{p}$-Carleson measure. This result  was proved by Ye and Zhou  \cite[Theorem 3.2]{d1}.
\end{re}
As a consequence of Theorem  \ref{th3.2}, we may obtain the following results.
\begin{cor}
Let  $0<p\leq 1$ and let $\mu$ be a positive Borel measure on $[0, 1)$. If $\hur:A^{p}\rightarrow A^{1}$ is bounded for some $\alpha>-1$, then for any $-1<\alpha'<\alpha$, $\mathcal {H}_{\mu}^{\alpha'}:A^{p}\rightarrow A^{1}$ is bounded.
\end{cor}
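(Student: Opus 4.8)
The plan is to reduce the statement entirely to the characterizations already proved: Theorem 4.1 and Remark 4.2 translate the boundedness of $\ii_{\mu_{\alpha+1}}:A^{p}\to A^{1}$ into a growth condition on $\mu$ that depends on whether $\alpha<1$, $\alpha=1$, or $\alpha>1$, and the whole content of the corollary is that this condition weakens as the parameter decreases. Before the case analysis I would record two elementary monotonicity facts about $\mu([t,1))$. First, if $\mu$ is an $s$-Carleson measure and $0<r\le s$, then $\mu$ is an $r$-Carleson measure, since $(1-t)^{s}\le(1-t)^{r}$ for $t\in[0,1)$. Second, if $\mu$ is an $s$-Carleson measure with $s>\tau>0$, then $\mu$ is a $1$-logarithmic $\tau$-Carleson measure, because
$$\mu([t,1))\log\frac{e}{1-t}\lesssim(1-t)^{\tau}\Bigl((1-t)^{s-\tau}\log\frac{e}{1-t}\Bigr)\lesssim(1-t)^{\tau},$$
the factor $(1-t)^{s-\tau}\log\frac{e}{1-t}$ being bounded on $[0,1)$; conversely, every $1$-logarithmic $\tau$-Carleson measure is trivially a $\tau$-Carleson measure, as $\log\frac{e}{1-t}\ge1$.

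With these at hand I would split into three cases. If $-1<\alpha<1$, then also $-1<\alpha'<1$, and by Theorem 4.1(b) both boundedness statements are equivalent to "$\mu$ is a $\frac{2}{p}$-Carleson measure", so there is nothing to prove. If $\alpha=1$, then $-1<\alpha'<1$; by Remark 4.2 the hypothesis gives that $\mu$ is a $1$-logarithmic $\frac{2}{p}$-Carleson measure, hence a $\frac{2}{p}$-Carleson measure, and Theorem 4.1(b) yields boundedness of $\ii_{\mu_{\alpha'+1}}:A^{p}\to A^{1}$. If $\alpha>1$, then by Theorem 4.1(a) $\mu$ is a $\bigl(\frac{2}{p}+\alpha-1\bigr)$-Carleson measure, and since $\frac{2}{p}\ge 2$ all the exponents appearing below are positive; now I would distinguish the sub-cases $-1<\alpha'<1$ (use the first fact with $\frac{2}{p}<\frac{2}{p}+\alpha-1$ to obtain a $\frac{2}{p}$-Carleson measure and apply Theorem 4.1(b)), $\alpha'=1$ (use the second fact with $\tau=\frac{2}{p}<\frac{2}{p}+\alpha-1=s$ to obtain a $1$-logarithmic $\frac{2}{p}$-Carleson measure and apply Remark 4.2), and $1<\alpha'<\alpha$ (use the first fact with $\frac{2}{p}+\alpha'-1\le\frac{2}{p}+\alpha-1$ to obtain a $\bigl(\frac{2}{p}+\alpha'-1\bigr)$-Carleson measure and apply Theorem 4.1(a)). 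In every case $\ii_{\mu_{\alpha'+1}}:A^{p}\to A^{1}$ is bounded.

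Since the argument only combines the already-established Theorem 4.1 and Remark 4.2 with the two monotonicity remarks, I do not expect a genuine obstacle; the only points needing a little care are the bookkeeping of the three threshold cases for $\alpha$ together with the sub-cases for $\alpha'$, and the routine check that $(1-t)^{s-\tau}\log\frac{e}{1-t}$ stays bounded on $[0,1)$ whenever $s>\tau$.
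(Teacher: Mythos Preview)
Your argument is correct and is essentially the approach the paper intends: the paper states the corollary as a direct consequence of Theorem~4.1 (together with Remark~4.2 for the threshold case $\alpha=1$) without spelling out the details, and you have supplied exactly those details via the natural monotonicity of the Carleson conditions in the parameter.
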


\begin{cor} Suppose that $0<p<\infty$ and $\alpha>-1$. Let  $\mu$ be a positive Borel measure on $[0, 1)$. Then the following statements hold.
\\ (a)\ If $\alpha>1$ and $\displaystyle{\I \frac{d\mu(t)}{(1-t)^{\frac{2}{p}+\alpha-1}}<\infty}$,
then  $\hur:A^{p}\rightarrow A^{1}$ is bounded.
 \\ (b)\  If $-1<\alpha<1$ and $\displaystyle{\I \frac{d\mu(t)}{(1-t)^{\frac{2}{p}}}<\infty}$, then $\hur:A^{p}\rightarrow A^{1}$ is bounded.
\end{cor}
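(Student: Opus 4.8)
The plan is to deduce both statements directly from the integral asymptotics of Lemma 2.6 together with the pointwise growth estimate (4.1), rather than running the duality argument used for Theorem 4.1. (For $0<p\le 1$ one could instead observe that $\I(1-t)^{-(\frac2p+\alpha-1)}\,d\mu(t)<\infty$ forces $\mu$ to be a $\frac2p+\alpha-1$-Carleson measure in case (a), and similarly a $\frac2p$-Carleson measure in case (b), and then quote Theorem 4.1; but the estimate sketched below covers all $0<p<\infty$ uniformly, so I would present it that way.)

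First I would check that $\ii_{\mu_{\alpha+1}}(f)$ is a well-defined analytic function on $\dd$ for every $f\in\ap$. In case (b) the hypothesis is exactly (4.2); in case (a) it implies (4.2), since $\frac2p+\alpha-1>\frac2p$ and $1-t\le 1$ give $\I(1-t)^{-2/p}\,d\mu(t)\le \I(1-t)^{-(\frac2p+\alpha-1)}\,d\mu(t)<\infty$.

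The core is then a short computation. By Fubini's theorem (legitimate because the iterated integral is finite, by the bound just recorded together with (4.1)),
$$\|\ii_{\mu_{\alpha+1}}(f)\|_{A^1}\le \I|f(t)|\left(\Id\frac{dA(z)}{|1-tz|^{\alpha+1}}\right)d\mu(t).$$
Applying Lemma 2.6 with $\delta=0$, so that its exponent $2+t+\delta$ equals $\alpha+1$ precisely when the lemma's parameter is $\alpha-1$: in case (a), $\alpha-1>0$ and part (3) gives the inner integral $\asymp(1-t^2)^{-(\alpha-1)}\asymp(1-t)^{-(\alpha-1)}$; in case (b), $\alpha-1<0$ and part (1) gives it $\asymp 1$. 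Inserting $|f(t)|\lesssim\fap(1-t)^{-2/p}$ from (4.1), case (a) becomes
$$\|\ii_{\mu_{\alpha+1}}(f)\|_{A^1}\lesssim \I\frac{|f(t)|}{(1-t)^{\alpha-1}}\,d\mu(t)\lesssim \fap\I\frac{d\mu(t)}{(1-t)^{\frac2p+\alpha-1}}\lesssim \fap,$$
and case (b) becomes
$$\|\ii_{\mu_{\alpha+1}}(f)\|_{A^1}\lesssim \I|f(t)|\,d\mu(t)\lesssim \fap\I\frac{d\mu(t)}{(1-t)^{2/p}}\lesssim \fap,$$
in both cases by hypothesis. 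This gives boundedness.

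I do not anticipate a genuine obstacle here: the argument merely assembles three facts already in place. The only point deserving care is the justification of Fubini's theorem and, equivalently, the a priori well-definedness of the operator, which I would settle through the reduction to (4.2) indicated above; the remaining steps are the routine estimates displayed.
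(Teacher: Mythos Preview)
Your argument is correct. For part (b) it coincides with the paper's proof. For part (a) you take a genuinely different, more elementary route: the paper passes through the duality pairing (3.2), bounding $|R^{0,\alpha-1}g(r^{2}t)|\lesssim\|g\|_{\mathcal B}(1-r^{2}t)^{-(\alpha-1)}$ via Lemma 2.3 and then invoking $(\mathcal B_{0})^{\star}\simeq A^{1}$, whereas you estimate $\|\ii_{\mu_{\alpha+1}}(f)\|_{A^{1}}$ directly by Fubini and the area asymptotics of Lemma 2.6. Your approach is cleaner and self-contained; the paper's detour through duality is consistent with the machinery set up for Theorem 4.1 but is not actually needed here, since the kernel $|1-tz|^{-(\alpha+1)}$ already has an explicit $A^{1}$ norm in $z$. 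Both arguments ultimately rely on (4.1) in the same way.
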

\begin{proof}
 (a)\ If $\alpha>1$ and $\I \frac{d\mu(t)}{(1-t)^{\frac{2}{p}+\alpha-1}}<\infty$,  for  every $f\in \ap$ and $g\in \mathcal {B}_{0}$,  by Lemma \ref{lm2.3} and (\ref{eq4.1}) we have that
   \[ \begin{split}
   \left|\I R^{0,\alpha-1} g(r^{2}t)  \overline{f(t)}d\mu(t)\right|
  & \lesssim ||f||_{A^{p}}||g||_{\mathcal {B}} \I \frac{d\mu(t)}{(1-r^{2}t)^{\alpha-1}(1-t)^{\frac{2}{p}}}\\
  & \lesssim  ||f||_{A^{p}}||g||_{\mathcal {B}}  \I \frac{d\mu(t)}{(1-t)^{\frac{2}{p}+\alpha-1}}\\
  & \lesssim  ||f||_{A^{p}}||g||_{\mathcal {B}}.
     \end{split} \]
     The rest of the proof is straightforward.

     (b)  If $-1<\alpha<1$ and $\I \frac{d\mu(t)}{(1-t)^{\frac{2}{p}}}<\infty$, then (\ref{eq4.1}) means that
     $$||\hur(f)||_{A^{1}}\lesssim\I |f(t)|d\mu(t)\lesssim ||f||_{A^{p}} \I \frac{d\mu(t)}{(1-t)^{\frac{2}{p}}}\lesssim ||f||_{A^{p}}.
     $$
     This finishes the proof. \end{proof}

\begin{re}
If  $\alpha> 1$  and $\displaystyle{\I \frac{d\mu(t)}{(1-t)^{\frac{2}{p}+\alpha-1}}<\infty}$ , then $\mu$ is a $\frac{2}{p}+\alpha-1$-Carleson measure. But the reverse does not hold. For example,  $d\mu(t)=(1-t)^{\frac{2}{p}+\alpha-2}dt$. It is easy to check that  $\mu$ is a $\frac{2}{p}+\alpha-1$-Carleson measure but $\displaystyle{\I \frac{d\mu(t)}{(1-t)^{\frac{2}{p}+\alpha-1}}=\infty}$.
\end{re}

Let us turn to the case  $0<p\leq 1$ and $1<q<\infty$. We have the following result.
\begin{thm}\label{th3.7}
Suppose that  $0<p \leq 1 <q<\infty$ and $\alpha>-1$. Let $\mu$ be a positive Borel measure on $[0, 1)$ which is a $\frac{2}{p}$-Carleson measure. Then the following statements hold:\\
(a) If  $\alpha+1-\frac{2}{q}>0$, then $\hur:A^{p}\rightarrow A^{q}$ is bounded if and only if $\mu$ is a $\frac{2}{p}+\frac{2}{q'}+\alpha-1$ Carleson measure.
\\(b) If   $\alpha+1-\frac{2}{q}<0$, then $\hur:A^{p}\rightarrow A^{q}$ is bounded.
\\ (c) If   $\alpha+1-\frac{2}{q}=0$ and $\mu$ is a $\frac{1}{q}$-logarithmic $\frac{2}{p}$-Carleson measure, then  $\hur:A^{p}\rightarrow A^{q}$ is bounded.

\end{thm}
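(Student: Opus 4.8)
The plan is to establish the sufficiency in cases (a), (b), (c) by one uniform argument and then to prove the necessity in (a) by testing on normalized reproducing‑kernel powers. For the sufficiency, recall first that since $\mu$ is a $\frac{2}{p}$-Carleson measure, the Carleson embedding theorem for $A^{p}$ gives $\int_{[0,1)}|f(t)|\,d\mu(t)\lesssim\fap$ for every $f\in\ap$ with $0<p\le 1$; in particular $\ii_{\mu_{\alpha+1}}(f)$ is a well-defined analytic function in $\dd$. By Minkowski's integral inequality (valid since $q\ge1$),
$$\|\ii_{\mu_{\alpha+1}}(f)\|_{A^{q}}\le\int_{[0,1)}|f(t)|\left(\int_{\dd}\frac{dA(z)}{|1-tz|^{(\alpha+1)q}}\right)^{1/q}d\mu(t).$$
Lemma 2.7 (with $\delta=0$) evaluates the inner integral: it is $\asymp(1-t)^{2-(\alpha+1)q}$, $\asymp\log\frac{e}{1-t}$, or $\asymp1$ according as $(\alpha+1)q-2$, equivalently $\beta:=\alpha+1-\frac{2}{q}$, is positive, zero, or negative. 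Writing $\Psi(t)=(1-t)^{-\beta}$ in case (a), $\Psi(t)=\left(\log\frac{e}{1-t}\right)^{1/q}$ in case (c), and $\Psi\equiv1$ in case (b), and noting $\beta=\frac{2}{q'}+\alpha-1$, we obtain $\|\ii_{\mu_{\alpha+1}}(f)\|_{A^{q}}\lesssim\int_{[0,1)}\Psi(t)\,|f(t)|\,d\mu(t)$.

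To finish the sufficiency I would show that $\Psi\,d\mu$ is a $\frac{2}{p}$-Carleson measure in each case and then invoke the embedding $A^{p}\hookrightarrow L^{1}(\Psi\,d\mu)$. In case (b) there is nothing to check. In case (a), $\Psi\,d\mu=\frac{d\mu}{(1-t)^{\beta}}$, and by Lemma 2.4 this is a $\frac{2}{p}$-Carleson measure precisely when $\mu$ is a $\left(\frac{2}{p}+\beta\right)=\left(\frac{2}{p}+\frac{2}{q'}+\alpha-1\right)$-Carleson measure, which is the hypothesis. In case (c), integration by parts together with the $\frac{1}{q}$-logarithmic $\frac{2}{p}$-Carleson hypothesis written as $\mu([s,1))\lesssim(1-s)^{2/p}\left(\log\frac{e}{1-s}\right)^{-1/q}$ yields
$$\int_{[t,1)}\left(\log\frac{e}{1-s}\right)^{1/q}d\mu(s)\lesssim\left(\log\frac{e}{1-t}\right)^{1/q}\mu([t,1))+\int_{t}^{1}\mu([s,1))\left(\log\frac{e}{1-s}\right)^{1/q-1}\frac{ds}{1-s}\lesssim(1-t)^{2/p},$$
so $\Psi\,d\mu$ is again a $\frac{2}{p}$-Carleson measure. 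This establishes the ``if'' direction of (a) and the whole of (b) and (c).

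For the necessity in (a), suppose $\ii_{\mu_{\alpha+1}}:\ap\to A^{q}$ is bounded and test on $f_{b}(z)=\frac{(1-b^{2})^{2/p}}{(1-bz)^{4/p}}$, $b\in(\frac{1}{2},1)$, for which $\|f_{b}\|_{A^{p}}\asymp1$ by Lemma 2.7, so $\|\ii_{\mu_{\alpha+1}}(f_{b})\|_{A^{q}}\lesssim1$. For the matching lower bound, choose $\varepsilon=\varepsilon(\alpha)>0$ small enough that on the disk $\Omega_{b}=\{|z-b|<\varepsilon(1-b)\}$ one has $|\arg(1-tz)|<\frac{\pi}{2(\alpha+1)}$ uniformly for $t\in[0,1)$; then $(1-tz)^{-(\alpha+1)}$ has positive real part for all such $t$, while for $t\in[b,1)$ we also have $|1-tz|\lesssim1-b$ and $f_{b}(t)\gtrsim(1-b^{2})^{-2/p}$. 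Hence $\mathrm{Re}\,\ii_{\mu_{\alpha+1}}(f_{b})(z)\gtrsim(1-b^{2})^{-2/p}(1-b)^{-(\alpha+1)}\mu([b,1))$ for $z\in\Omega_{b}$, and since $|\Omega_{b}|\asymp(1-b)^{2}$ we get $1\gtrsim\|\ii_{\mu_{\alpha+1}}(f_{b})\|_{A^{q}}\gtrsim(1-b^{2})^{-2/p}\mu([b,1))(1-b)^{2/q-(\alpha+1)}$. Rearranging, $\mu([b,1))\lesssim(1-b)^{2/p+\alpha+1-2/q}=(1-b)^{2/p+2/q'+\alpha-1}$, so $\mu$ is a $\left(\frac{2}{p}+\frac{2}{q'}+\alpha-1\right)$-Carleson measure, completing (a).

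I expect the \emph{main obstacles} to be two finicky points. First, in case (c), the power $\frac{1}{q}$ on the logarithm — which appears only because one takes the $q$-th root of an $L^{q}$ integral — must line up exactly with the $\frac{1}{q}$-logarithmic hypothesis through the integration by parts, so that both the boundary term and the remaining integral come out $\lesssim(1-t)^{2/p}$; this is delicate bookkeeping rather than a deep difficulty. Second, in the necessity, one must choose the region $\Omega_{b}$ adapted to $\alpha$ (small enough that $(1-tz)^{-(\alpha+1)}$ keeps positive real part for all $t\in[0,1)$) so that $\ii_{\mu_{\alpha+1}}(f_{b})$ has no cancellation there, which is what allows one to convert the pointwise lower bound of order $(1-b^{2})^{-2/p}(1-b)^{-(\alpha+1)}\mu([b,1))$ into a genuine lower bound for $\|\ii_{\mu_{\alpha+1}}(f_{b})\|_{A^{q}}$ over a set of area comparable to $(1-b)^{2}$.
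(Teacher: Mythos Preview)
Your sufficiency argument is essentially identical to the paper's: both apply Minkowski's integral inequality, evaluate the inner $A^q$-integral via the Forelli--Rudin estimate (this is Lemma~2.6 in the paper, not Lemma~2.7), and then verify that the resulting weight times $d\mu$ is a $\tfrac{2}{p}$-Carleson measure so that the Carleson embedding for $A^p$ ($0<p\le 1$) finishes. In case (c) the paper simply cites an external lemma (Lemma~2.5 in \cite{H5}) to conclude that $\bigl(\log\tfrac{e}{1-t}\bigr)^{1/q}\,d\mu$ is a $\tfrac{2}{p}$-Carleson measure, whereas you supply the short integration-by-parts proof of that fact; the content is the same.

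Your necessity in (a) is a genuinely different route. The paper argues by duality, exactly as in the proof of Theorem~3.5: it rewrites boundedness of $\ii_{\mu_{\alpha+1}}:A^p\to A^q$ as the pairing estimate $\bigl|\int_0^1 R^{0,\alpha-1}g(t)\,\overline{f(t)}\,d\mu(t)\bigr|\lesssim\|f\|_{A^p}\|g\|_{A^{q'}}$, then tests with $f_a(z)=(1-a^2)^\beta(1-az)^{-\beta-2/p}$ and $g_a(z)=(1-a^2)^\beta(1-az)^{-\beta-2/q'}$, using Stirling to identify $R^{0,\alpha-1}g_a(t)\asymp(1-a^2)^\beta(1-at)^{-(\beta+\alpha+2/q'-1)}$. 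You instead bound $\|\ii_{\mu_{\alpha+1}}(f_b)\|_{A^q}$ from below directly, by showing $\mathrm{Re}\,(1-tz)^{-(\alpha+1)}>0$ on a disk $\Omega_b$ of radius $\varepsilon(1-b)$ about $b$ (so there is no cancellation in the $t$-integral), and then integrating the resulting pointwise lower bound over $\Omega_b$. Your approach is more elementary for this statement --- it avoids setting up the $(A^q,A^{q'})$ duality and the $R^{0,\alpha-1}$ machinery --- while the paper's approach fits its systematic duality framework and recycles computations already done for Theorems~3.1 and~3.5. Both give the same Carleson exponent $\tfrac{2}{p}+\tfrac{2}{q'}+\alpha-1$.
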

\begin{proof}
 (a).  If  $\ii_{\mu_{\alpha+1}}:A^{p}\rightarrow A^{q}$ is bounded, then arguing as the  proof of Theorem \ref{th3.2} we may obtain  that $\mu$ is a $\frac{2}{p}+\frac{2}{q'}+\alpha-1$ Carleson measure. Thus, we only need to prove the sufficiency.
By using  the integral form of Minkowski's inequality and Lemma \ref{lm2.4}, we have
$$
 ||\hur(f)||_{A^{q}}\leq \left\{ \Id \left(\I \frac{|f(t)|}{|1-tz|^{\alpha+1}}d\mu(t)\right)^{q}dA(z)\right\}^{\frac{1}{q}}\ \ \ \ \ \  \ \ \ \ \ \ \ \  $$
\begin{equation}\label{eq4.4}
\leq \I|f(t)| \left(\Id\frac{dA(z)}{|1-tz|^{q(\alpha+1)}}\right)^{\frac{1}{q}}d\mu(t) \end{equation}
$$ \lesssim \I |f(t)| \frac{d\mu(t)}{(1-t)^{\alpha+1-\frac{2}{q}}}. \ \ \ \ \ \ \ \ \ \ \ \ \ \ \ \
 $$
If $\mu$ is a $\frac{2}{p}+\frac{2}{q'}+\alpha-1$ Carleson measure, then Lemma \ref{lm2.5} shows that $\frac{d\mu(t)}{(1-t)^{\alpha+1-\frac{2}{q}}}$ is a $\frac{2}{p}$-Carleson measure. Hence we have that
$$ \I |f(t)| \frac{d\mu(t)}{(1-t)^{\alpha+1-\frac{2}{q}}}\lesssim ||f||_{A^{p}}, \ \ \ 0<p\leq 1.$$
This means that $\hur:A^{p}\rightarrow A^{q}$ is bounded.

(b).  If   $\alpha+1-\frac{2}{q}<0$, by Lemma \ref{lm2.4} and (\ref{eq4.4}) we have
$$ ||\ii_{\mu_{\alpha+1}}(f)||_{A^{q}}\lesssim \I |f(t)| d\mu(t)\lesssim ||f||_{A^{p}}.
$$
This implies  $\hur:A^{p}\rightarrow A^{q}$ is bounded.

(c). If   $\alpha+1-\frac{2}{q}=0$, Lemma \ref{lm2.4} and  and (\ref{eq4.4}) show that
$$ ||\hur(f)||_{A^{q}}\lesssim \I |f(t)|\left(\log\frac{e}{1-t}\right)^{\frac{1}{q}} d\mu(t).$$
Since $\mu$  is a $\frac{1}{q}$-logarithmic $\frac{2}{p}$-Carleson measure,  it follows from Lemma 2.5 in \cite{H5}  that $\left(\log\frac{e}{1-t}\right)^{\frac{1}{q}} d\mu(t)$ is a $\frac{2}{p}$-Carleson measure. Hence,
$$ \I |f(t)|\left(\log\frac{e}{1-t}\right)^{\frac{1}{q}} d\mu(t)\lesssim ||f||_{A^{p}}.$$
This finishes the proof.
\end{proof}
\begin{thm}\label{th3.8}
Suppose that $1\leq p \leq 2$ and $\alpha>1$. Let $\mu$ be a positive Borel measure on $[0, 1) $ which  is a $\frac{2-(p-1)^{2}}{p}$ Carleson measure.  Then
 $\hur$ is a  bounded  operator on $A^{p}$ if and only if $\mu$ is an $\alpha+1$ Carleson measure.
\end{thm}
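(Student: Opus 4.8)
I would prove the two implications separately, and in both directions the endpoint $p=1$ is already settled: by Theorem 4.1(a), $\ii_{\mu_{\alpha+1}}\colon A^{1}\to A^{1}$ is bounded if and only if $\mu$ is a $\frac{2}{1}+\alpha-1=\alpha+1$ Carleson measure. So what remains is, for $1<p\le 2$, to show that boundedness forces the $\alpha+1$ Carleson condition and that this condition is sufficient, bootstrapping from $p=1$ and $p=2$.

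\textbf{Necessity for $1<p\le 2$.} Following the scheme in the proofs of Theorems 3.1, 3.5 and 4.8, I would first rewrite boundedness as a bilinear testing inequality. Using the pairing $\langle F,G\rangle=\lim_{r\to 1^{-}}\Id F(rz)\overline{G(rz)}\,dA(z)$, the duality $(\ap)^{\star}\simeq A^{p'}$, and the Fubini identity $\langle \ii_{\mu_{\alpha+1}}(f),g\rangle=\I R^{0,\alpha-1}g(t)\,\overline{f(t)}\,d\mu(t)$ (valid, as in (3.2), for $f,g$ bounded on $[0,1)$, which is all that is needed), boundedness of $\ii_{\mu_{\alpha+1}}\colon\ap\to\ap$ gives
$$\Bigl|\I R^{0,\alpha-1}g(t)\,\overline{f(t)}\,d\mu(t)\Bigr|\lesssim \|f\|_{\ap}\|g\|_{A^{p'}}.$$
Then I would insert the test pair $f_{a}(z)=(1-a^{2})^{\beta}(1-az)^{-\beta-\frac{2}{p}}$, $g_{a}(z)=(1-a^{2})^{\beta}(1-az)^{-\beta-\frac{2}{p'}}$ with $\beta$ large, for which $\sup_{a}\|f_{a}\|_{\ap}\asymp\sup_{a}\|g_{a}\|_{A^{p'}}\asymp 1$; by Stirling's formula (exactly as in (3.9)) $R^{0,\alpha-1}g_{a}(t)\asymp (1-a^{2})^{\beta}(1-at)^{-\beta-\frac{2}{p'}-\alpha+1}$, and, since $\frac{2}{p}+\frac{2}{p'}=2$ and the integrand is positive, restricting to $[a,1)$ yields
$$1\gtrsim (1-a^{2})^{2\beta}\I\frac{d\mu(t)}{(1-at)^{2\beta+\alpha+1}}\gtrsim\frac{\mu([a,1))}{(1-a^{2})^{\alpha+1}}.$$
Letting $a\to 1^{-}$ shows $\mu$ is an $\alpha+1$ Carleson measure.

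\textbf{Sufficiency.} Suppose $\mu$ is an $\alpha+1$ Carleson measure. By integration by parts and $\alpha>1$ one gets (4.2) and $\mu_{m}=\I t^{m}d\mu(t)\lesssim m^{-(\alpha+1)}$, so $\ii_{\mu_{\alpha+1}}$ is a well-defined operator on $\ap$ for $1\le p\le 2$ (and the standing hypothesis is then automatic, since $\frac{2-(p-1)^{2}}{p}\le 2<\alpha+1$). The case $p=1$ is Theorem 4.1(a). For $p=2$, writing $f=\sn a_{n}z^{n}$ I would use $\|\ii_{\mu_{\alpha+1}}(f)\|_{A^{2}}^{2}\asymp\sum_{n}n^{2\alpha-1}\bigl|\sum_{k}\mu_{n+k}a_{k}\bigr|^{2}$ together with $\mu_{n+k}\lesssim (n+k)^{-(\alpha+1)}$ to reduce matters to the $\ell^{2}$-boundedness of the matrix with entries $\asymp n^{\alpha-\frac12}k^{\frac12}(n+k)^{-(\alpha+1)}$, which follows from Schur's test with weights $n^{-1/2}$ and $k^{-1/2}$ (admissible precisely because $\alpha>1$); hence $\ii_{\mu_{\alpha+1}}$ is bounded on $A^{2}$. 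Finally, for $1<p<2$ I would dominate $|\ii_{\mu_{\alpha+1}}(f)(z)|$ by the positive sublinear operator $\mathcal{T}f(z)=\I\frac{|f(t)|}{|1-tz|^{\alpha+1}}\,d\mu(t)$, check via Fubini and Lemma 2.7 at $p=1$ and via reduction to functions with nonnegative coefficients (as in the proof of Theorem 4.8) at $p=2$ that $\mathcal{T}$ is bounded $A^{1}\to L^{1}(\dd,dA)$ and $A^{2}\to L^{2}(\dd,dA)$, and invoke the Marcinkiewicz interpolation theorem on the $L^{p}(\dd,dA)$-scale to get $\mathcal{T}\colon\ap\to L^{p}(\dd,dA)$ bounded for $1<p<2$; therefore $\ii_{\mu_{\alpha+1}}$ is bounded on $\ap$.

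\textbf{Main obstacle.} The delicate points are the $A^{2}$ estimate and the correct sublinear set-up for interpolation: one must confirm the Schur weights work for every $\alpha>1$, show the endpoint bounds genuinely hold for the positive majorant $\mathcal{T}$ (not merely for $\ii_{\mu_{\alpha+1}}$, since for $p\notin\{1,2,4,\dots\}$ there is no pointwise comparison of $M_{p}$ under domination of Taylor coefficients), and carry out the weighted reduction of $\mathcal{T}$ to data on $[0,1)$ so that Marcinkiewicz interpolation applies — the bookkeeping of Carleson exponents in that reduction is exactly where the hypothesis that $\mu$ is a $\frac{2-(p-1)^{2}}{p}$ Carleson measure is used, and one should note that this hypothesis is weaker than the $\alpha+1$ Carleson condition, so the final ``if and only if'' is clean.
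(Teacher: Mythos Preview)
Your necessity argument and both endpoint bounds are essentially the paper's: the testing with $f_{a},g_{a}$ is exactly what the paper means when it says ``similar to Theorem 3.5'', the $p=1$ case is Theorem 4.1(a), and your Schur test at $p=2$ is the paper's Hilbert-inequality chain in slightly different clothing (the paper writes $(n+1)^{\alpha-1/2}(n+k+1)^{-(\alpha+1)}\le (n+k+1)^{-3/2}\le (n+k+1)^{-1}(k+1)^{-1/2}$ and then quotes Hilbert's inequality, which is the same matrix bound your weights $n^{-1/2},k^{-1/2}$ produce; incidentally the Schur sums already converge for $\alpha>0$, so ``precisely because $\alpha>1$'' is an overstatement, though harmless here).

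The genuine gap is the interpolation step. Marcinkiewicz interpolation is a real-method statement about sublinear operators on $L^{p}$-spaces (or quasi-Banach lattices), and your domain is $A^{p}$, a closed analytic subspace of $L^{p}(\dd,dA)$ with no lattice structure; the operator $\mathcal{T}$ depends on the values of $f$ on the null set $[0,1)$, so it admits no natural extension to $L^{p}(\dd)$, and the ``weighted reduction of $\mathcal{T}$ to data on $[0,1)$'' you allude to is not carried out and is not a standard maneuver. Moreover, you only established the $A^{2}$ endpoint for the \emph{linear} operator $\ii_{\mu_{\alpha+1}}$ (via the matrix/Schur argument on Taylor coefficients), not for the non-analytic majorant $\mathcal{T}f(z)=\I |f(t)|\,|1-tz|^{-(\alpha+1)}\,d\mu(t)$; the ``reduction to nonnegative coefficients'' from Theorem 4.8 does not transfer, because $\mathcal{T}f$ is not holomorphic and has no Taylor expansion to compare.

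The paper avoids all of this by using the complex method: since $\ii_{\mu_{\alpha+1}}$ is linear and bounded on $A^{1}$ and on $A^{2}$, and $A^{p}=[A^{1},A^{2}]_{\theta}$ with $\theta=2-\tfrac{2}{p}$ (Theorem 2.34 in \cite{b7}), boundedness on $A^{p}$ for $1<p<2$ follows immediately. Replace your Marcinkiewicz step by this one-line appeal to complex interpolation and the proof is complete; no sublinear majorant is needed at all. (The hypothesis that $\mu$ is a $\frac{2-(p-1)^{2}}{p}$ Carleson measure is used in the paper only to guarantee $\mathcal{H}_{\mu}^{\alpha}=\ii_{\mu_{\alpha+1}}$ on $A^{p}$, and, as you note, is implied by the $\alpha+1$ Carleson condition in the sufficiency direction.)
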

\begin{proof}
If  $\mu$ is a $\frac{2-(p-1)^{2}}{p}$-Carleson measure, then  Lemma \ref{lm3.1} shows that $\mathcal {H}_{\mu}^{\alpha}$ is well defined on $A^{p}$  and $\mathcal {H}_{\mu}^{\alpha}(f)=\ii_{\mu_{\alpha+1}}(f)$ for all $f\in \ap \ (1\leq p\leq 2)$.

If $p=1$,  it follows from Theorem \ref{th3.2} that $\hur$  is a bounded operator on $ A^{1}$ if
and only if $\mu$ is an $\alpha+1$-Carleson measure.

 If $p=2$ and $\mu$ is an $\alpha+1$ Carleson measure. Let $f(z)=\sn a_{n}z^{n}\in A^{2}$, then $||f||^{2}_{A^{2}}=\sn \frac{|a_{n}|^{2}}{n+1}$.  Since $\mu$ is an $\alpha+1$ Carleson measure, we have
\begin{equation}\label{eq4.5}
|\mu_{n,k}|=|\mu_{n+k}|\lesssim \frac{1}{(n+k+1)^{\alpha+1}}. \end{equation}
   Using (\ref{eq4.5})  and the Hilbert's  inequality,  we  have
     \[ \begin{split}
     ||\mathcal {H}_{\mu}^{\alpha}(f)||^{2}_{A^{2}}
     & \asymp \sn (n+1)^{2\alpha-1}\left|\sk \mu_{n,k}a_{k}\right|^{2}\\
     & \lesssim \sn (n+1)^{2\alpha-1}\left(\sk \frac{|a_{k}|}{(n+k+1)^{\alpha+1}}\right)^{2}\\
     & \leq \sn \left(\sk \frac{|a_{k}|}{(n+k+1)^{\frac{3}{2}}}\right)^{2}\\
     & \leq \sn  \left(\sk \frac{|a_{k}|}{(n+k+1)(k+1)^{\frac{1}{2}}}\right)^{2}\\
     & \lesssim  \sk \frac{|a_{k}|^{2}}{k+1}.
       \end{split} \]
   The last step above used the Hilbert's inequality. Thus   $\hur$ is a  bounded  operator on $A^{2}$.
    The complex interpolation theorem (see Theorem 2.34 in \cite{b7}) shows that                                                                                                                                                                                           $$A^{p}=[A^{1},A^{2}]_{\theta}, \ \ \ \mbox{if}\ 1<p<2\  \mbox{and}\  \theta=2-\frac{2}{p}. $$
This implies that $\hur$ is a  bounded  operator on $A^{p}(1\leq p \leq 2)$.  The proof of the  necessity is similar to  that of Theorem \ref{th3.2}, we omit the details here.
\end{proof}
In \cite{H14},  the authors proved that $\mathcal {H}_{\mu}$ is a Hilbert-Schmidt operator on $A^{2}$ if and only if
$$\I \frac{\mu([t,1))}{(1-t)^{2}}\log \frac{e}{1-t}d\mu(t)<\infty. $$

We recall that an operator $T$ on a separable Hilbert space
$X$ is a Hilbert-Schmidt operator if for an orthonormal basis $\{e_{k}: k=0,1,2,\cdots\}$ of
$X$  the sum $\sum_{k=0}^{\infty}||T(e_{k})||^{2}_{X}$  is finite. The finiteness of this sum does not depend on
the basis chosen.   Here, we consider the analogous problem on $A^{2}$  for  $\alpha\neq0$.

\begin{thm}
Let  $\mu$ be a positive Borel measure on $[0, 1) $ which  is a $\frac{1}{2}$ Carleson measure. Then the following statements  hold.
  \\ (a) If $\alpha>0$, then $\hur$ is a Hilbert-Schmidt operator on $A^{2}$
 if and only if
 \begin{equation}\label{eq4.6}
 \I \frac{\mu([t,1))}{(1-t)^{2+2\alpha}}d\mu(t)<\infty. \end{equation}
 (b) If $-1<\alpha<0$, then $\hur$ is a Hilbert-Schmidt operator on $A^{2}$.
\end{thm}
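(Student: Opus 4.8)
The plan is to compute the Hilbert--Schmidt sum $\sum_{n=0}^{\infty}\|\ii_{\mu_{\alpha+1}}(e_{n})\|_{A^{2}}^{2}$ for the standard orthonormal basis $e_{n}(z)=\sqrt{n+1}\,z^{n}$ of $A^{2}$, and then to recognise the resulting quantity as a double integral against $\mu$. Recall that under (4.16) the operator $\ii_{\mu_{\alpha+1}}$ is well defined on $A^{2}$, and expanding the kernel $(1-tz)^{-(\alpha+1)}=\sum_{m}\frac{\Gamma(m+1+\alpha)}{\Gamma(m+1)\Gamma(\alpha+1)}t^{m}z^{m}$ gives, after interchanging sum and integral (justified by absolute convergence),
$$\ii_{\mu_{\alpha+1}}(z^{n})(z)=\sum_{m=0}^{\infty}\frac{\Gamma(m+1+\alpha)}{\Gamma(m+1)\Gamma(\alpha+1)}\,\mu_{m+n}\,z^{m}.$$
Since $\|z^{m}\|_{A^{2}}^{2}=\frac{1}{m+1}$, this yields
$$\|\ii_{\mu_{\alpha+1}}(e_{n})\|_{A^{2}}^{2}=(n+1)\sum_{m=0}^{\infty}\left(\frac{\Gamma(m+1+\alpha)}{\Gamma(m+1)\Gamma(\alpha+1)}\right)^{2}\frac{\mu_{m+n}^{2}}{m+1}.$$

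Next I would sum over $n$. Stirling's formula gives $\frac{\Gamma(m+1+\alpha)}{\Gamma(m+1)\Gamma(\alpha+1)}\asymp(m+1)^{\alpha}$ uniformly in $m$, and writing $\mu_{m+n}^{2}=\I\I(ts)^{m+n}d\mu(t)d\mu(s)$, Tonelli's theorem gives
$$\sum_{n=0}^{\infty}\|\ii_{\mu_{\alpha+1}}(e_{n})\|_{A^{2}}^{2}\asymp\I\I\left(\sum_{n=0}^{\infty}(n+1)(ts)^{n}\right)\left(\sum_{m=0}^{\infty}(m+1)^{2\alpha-1}(ts)^{m}\right)d\mu(t)d\mu(s).$$
The first inner sum is exactly $(1-ts)^{-2}$. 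For the second I would compare its nonnegative coefficients $(m+1)^{2\alpha-1}$ with those of a known power series: when $\alpha>0$ one has $(m+1)^{2\alpha-1}\asymp\frac{\Gamma(m+2\alpha)}{\Gamma(2\alpha)\Gamma(m+1)}$, so $\sum_{m}(m+1)^{2\alpha-1}x^{m}\asymp(1-x)^{-2\alpha}$ for $x\in[0,1)$; when $-1<\alpha<0$ the exponent $2\alpha-1<-1$, so $\sum_{m}(m+1)^{2\alpha-1}x^{m}$ is a convergent series bounded above and below by positive constants on $[0,1)$, that is, $\asymp 1$. (The excluded value $\alpha=0$ is precisely the logarithmic borderline treated in \cite{H14}.)

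Consequently, for $\alpha>0$ the Hilbert--Schmidt sum is $\asymp\I\I\frac{d\mu(t)d\mu(s)}{(1-ts)^{2+2\alpha}}$, and for $-1<\alpha<0$ it is $\asymp\I\I\frac{d\mu(t)d\mu(s)}{(1-ts)^{2}}$. The final ingredient is the equivalence, valid for every $\beta>1$,
$$\I\I\frac{d\mu(t)d\mu(s)}{(1-ts)^{\beta}}\asymp\I\frac{\mu([t,1))}{(1-t)^{\beta}}\,d\mu(t),$$
which I would establish exactly as in \cite{H14} (the case $\beta=2$): either integrate by parts in the inner integral, or use the elementary estimate $1-ts\asymp\max\{1-t,1-s\}$ for $t,s\in[0,1)$, split the square into $\{t\le s\}$ and $\{t>s\}$, and note that the contribution of the diagonal (coming from atoms of $\mu$) is dominated by the right-hand side. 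Applying this with $\beta=2+2\alpha>1$ proves part (a): $\ii_{\mu_{\alpha+1}}$ is Hilbert--Schmidt on $A^{2}$ if and only if (4.17) holds. Applying it with $\beta=2$ and invoking hypothesis (4.16) shows that in case (b) the sum is always finite, so $\ii_{\mu_{\alpha+1}}$ is automatically Hilbert--Schmidt on $A^{2}$ when $-1<\alpha<0$. The only genuinely delicate point is this last double-integral/single-integral comparison---in particular the careful treatment of atoms of $\mu$ on the diagonal; the remaining steps are routine bookkeeping with Stirling's formula, Tonelli's theorem, and power-series coefficient comparison.
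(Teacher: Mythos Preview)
Your proof is correct and follows essentially the same route as the paper: compute the Hilbert--Schmidt sum against the basis $e_n(z)=\sqrt{n+1}\,z^n$, use Stirling to replace the Gamma ratio by $(m+1)^{\alpha}$, interchange with the double $\mu$-integral via Tonelli, evaluate the resulting power series as $(1-ts)^{-2}$ times $(1-ts)^{-2\alpha}$ (or a bounded factor when $-1<\alpha<0$), and finish by symmetrising the double integral to the single integral in (4.17). Your only addition is the explicit caveat about atoms on the diagonal, a point the paper passes over silently when writing $\int\!\!\int=2\int_0^1\int_t^1$.
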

\begin{proof}
(a). Note that  $\mathcal {H}_{\mu}^{\alpha}(f)$ is well defined on  $ A^{2}$ by the  assumption of the measure $\mu$. Take the orthonormal basis $\{e_{k}\}_{k\geq 0}=\{(k+1)^{\frac{1}{2}}z^{k}\}_{k\geq 0}$, we have  that
  \[ \begin{split}
 \sk ||\mathcal {H}_{\mu}^{\alpha}(e_{k})||_{A ^{2}}^{2}
 &\asymp \sk (k+1)\sn (n+1)^{2\alpha-1}\mu_{n,k}^{2}\\
  & = \sk (k+1)\sn  (n+1)^{2\alpha-1}\I \I (ts)^{n+k}d\mu(s)d\mu(t)\\
  &\asymp \I \I \frac{1}{(1-ts)^{2+2\alpha}}d\mu(s)d\mu(t)\\
  & = 2 \I\int_{t}^{1} \frac{1}{(1-ts)^{2+2\alpha}}d\mu(s)d\mu(t)\\
  & \asymp \I \frac{\mu([t,1))}{(1-t)^{2+2\alpha}}d\mu(t).
      \end{split} \]
 So the operator  $\hur$  is Hilbert-Schmidt on $A^{2}$ if and only if (\ref{eq4.6}) holds.

(b). If $-1<\alpha<0$, then $\sn (n+1)^{2\alpha-1} \asymp1$. The rest of the  proof is obvious.
\end{proof}

\begin{cor}
Let  $\mu$ be  a positive Borel measure on $[0, 1) $. If   $\hur$  is a Hilbert-Schmidt operator on $A^{2}$ for some $\alpha>-1$, then for any $-1<\alpha'<\alpha$,  $\mathcal {H}_{\mu}^{\alpha'}$  is a Hilbert-Schmidt operator on $A^{2}$.
\end{cor}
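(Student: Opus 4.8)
The plan is to reduce everything to the explicit integral criterion of Theorem 4.12 together with a monotonicity estimate in the parameter $\alpha$. Fix $-1<\alpha'<\alpha$ and assume $\ii_{\mu_{\alpha+1}}$ is Hilbert--Schmidt on $A^2$. I would split into the two ranges exactly as in Theorem 4.12. If $-1<\alpha'<0$, then part (b) of Theorem 4.12 applies directly: under hypothesis (4.16) the operator $\ii_{\mu_{\alpha'+1}}$ is automatically Hilbert--Schmidt, and there is nothing further to prove. So the only genuine case is $\alpha'\ge 0$, which (since $\alpha'<\alpha$) forces $\alpha>0$, and then both $\ii_{\mu_{\alpha+1}}$ and $\ii_{\mu_{\alpha'+1}}$ fall under part (a) of Theorem 4.12.

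In that case the hypothesis says $\displaystyle\I \frac{\mu([t,1))}{(1-t)^{2+2\alpha}}\,d\mu(t)<\infty$, and the goal is $\displaystyle\I \frac{\mu([t,1))}{(1-t)^{2+2\alpha'}}\,d\mu(t)<\infty$ (when $\alpha'>0$) or the condition (4.16) itself (when $\alpha'=0$). Since $0<1-t\le 1$ on $[0,1)$ and $2+2\alpha' \le 2+2\alpha$, we have the pointwise bound $(1-t)^{-(2+2\alpha')}\le (1-t)^{-(2+2\alpha)}$ for every $t\in[0,1)$. Hence
\[
\I \frac{\mu([t,1))}{(1-t)^{2+2\alpha'}}\,d\mu(t)\;\le\;\I \frac{\mu([t,1))}{(1-t)^{2+2\alpha}}\,d\mu(t)\;<\;\infty ,
\]
and when $\alpha'=0$ this is exactly (4.16). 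By Theorem 4.12 (part (a) if $\alpha'>0$, part (b) if $\alpha'=0$ is vacuous and one only needs (4.16) which we have just verified), $\ii_{\mu_{\alpha'+1}}$ is Hilbert--Schmidt on $A^2$.

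I do not anticipate a real obstacle here; the only point requiring a little care is bookkeeping of the three sub-cases $-1<\alpha'<0$, $\alpha'=0$, and $0<\alpha'<\alpha$, and observing that in every case the relevant finiteness condition for the smaller parameter is dominated by the one assumed for the larger parameter, because $(1-t)$ is bounded by $1$ on the domain of integration. One should also note at the outset that hypothesis (4.16) is assumed for $\mu$ throughout, so that both operators are genuinely realized as $\mathcal{H}_\mu^{\alpha}$ and $\mathcal{H}_\mu^{\alpha'}$ on $A^2$ and Theorem 4.12 is applicable to each. This completes the argument.
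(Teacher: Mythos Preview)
Your proposal is correct and follows the route the paper implicitly intends; the paper states the corollary without proof, as an immediate consequence of Theorem~4.12, and your case split together with the pointwise monotonicity $(1-t)^{-(2+2\alpha')}\le(1-t)^{-(2+2\alpha)}$ on $[0,1)$ is exactly the argument one expects.

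One small wrinkle worth tightening: Theorem~4.12 as stated covers only $\alpha'>0$ and $-1<\alpha'<0$, so your appeal to it at the endpoint $\alpha'=0$ is not quite clean, and your claim that ``one only needs (4.16)'' there is imprecise. If you rerun the orthonormal-basis computation from the proof of Theorem~4.12(a) with $\alpha'=0$, the series $\sum_n(n+1)^{-1}(ts)^n$ produces a logarithm rather than a power, so the Hilbert--Schmidt criterion for $\ii_{\mu_1}$ is $\int_0^1 \mu([t,1))\,(1-t)^{-2}\log\frac{e}{1-t}\,d\mu(t)<\infty$, not (4.16) alone. This does not damage your conclusion, since for any $\alpha>0$ one has $\log\frac{e}{1-t}\,(1-t)^{-2}\lesssim(1-t)^{-(2+2\alpha)}$ and hence (4.17) still dominates; but you should either invoke the $\alpha=0$ characterization from \cite{H14} explicitly or insert this one-line logarithmic estimate rather than asserting that (4.16) suffices.
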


\begin{re}
It is not difficult to obtain the results of compactness in Theorem \ref{th3.2}, Theorem \ref{th3.7} and Theorem \ref{th3.8}.  They are  vanishing Carleson type measures. Note that the  boundedness (resp. compactness) of  $\hu$ acting from $H^{p}(0<p\leq 1)$ to $H^{1}$ has been studied in \cite{H14,H3}. The proofs  of  Theorem \ref{th3.2} and Theorem \ref{th3.7}  can be applied to study
  the operator $\hur$ acting from $H^{p}(0<p\leq1)$ into $H^{q}(q\geq 1)$, or  $\hur$ acting from $H^{p}(0<p\leq1)$ into $A^{q}(q \geq 1)$.
 \end{re}

\section{Generalized  Hilbert  operator acting  from   $\ap$  to  $\mathcal {B}$}

\ \ \ The Bloch space $\B$ can be viewed as the limit case of $A^{q}$ as  $q\rightarrow +\infty$. In order to obtain the boundedness of   $\hur:\ap \rightarrow \B$ for all $\alpha>-1$ and $0<p<\infty$, let us do some preparations.

In \cite{gi1}, a sequence $\{V_{n}\}$ was constructed in the following way:
Let $\psi$ be a $C^{\infty}$-function on $\mathbb{R}$ such that
(1)\ $\psi(s)=1$ for $s\leq 1$, (2)\ $\psi(s)=0 $ for $s\geq 2$,  (3)\ $\psi$ is decreasing and positive on the interval $(1,2)$.

Let $\varphi(s)=\psi(\frac{s}{2})-\psi(s)$, and let $v_{0}=1+z$, for $n\geq 1$,
$$V_{n}(z)=\sk \varphi(\frac{k}{2^{n-1}})z^{k}=\sum_{k=2^{n-1}}^{2^{n+1}-1}\varphi(\frac{k}{2^{n-1}})z^{k}.$$
The polynomials $V_{n}$ have the properties:
\\ (1)\ $\displaystyle{f(z)=\sn V_{n}\ast f(z)}$ , for $f\in \hd$;
\\ (2)\ $||V_{n}\ast f||_{p}\lesssim ||f||_{p}$, for $f\in H^{p}, p>0$;
\\ (3)\ $||V_{n}||_{p}\asymp 2^{n(1-\frac{1}{p})}$, for all $p>0$,
where $\ast$ denotes the Hadamard product and $||\cdot||_{p}$ denotes the norm of Hardy space $H^{p}$.

The following  lemma  can be found in \cite[  Theorem  3.1 ]{gi2}.
\begin{lemma}\label{lm4.1}
Let  $f\in H(\mathbb{D})$, then
$f\in \mathcal {B}$ if and only if  $$\sup_{n\geq0}||V_{n}\ast f||_{\infty}<\infty.$$
Moreover, $$||f||_{\mathcal {B}}\asymp \sup_{n\geq0}||V_{n}\ast f||_{\infty}.$$
\end{lemma}
The sublinear  generalized integral type Hilbert operator $\widetilde{\ii}_{\mu_{\alpha+1}}$  is defined by
$$\widetilde{\ii}_{\mu_{\alpha+1}}(f)(z)=\I \frac{|f(t)|}{(1-tz)^{\alpha+1}}d\mu(t), \ \ \ (\alpha>-1).$$

 \begin{thm}\label{th4.2}
 Let  $0<p <\infty$ and $\alpha>-1$. Suppose  $\mu$ is  a positive Borel measure on $[0, 1)$ and satisfies  the conditions of Lemma \ref{lm3.1}. Then the following statements are equivalent:
 \\ (a)  $\hur: A^{p}\rightarrow\mathcal {B}$ is bounded;
 \\ (b) $\widetilde{\ii}_{\mu_{\alpha+1}}: A^{p}\rightarrow\mathcal {B}$ is bounded;
 \\(c)  $\mu$ is a $\frac{2}{p}+\alpha+1$ Carleson measure.
\end{thm}

\begin{proof}
 $(a)\Rightarrow (c)$. A similar discussion to the proof of Theorem \ref{th3.2} implies that
 $\hur:\ap \rightarrow \B$ is bounded if and only if
     \begin{equation}\label{eq5.1}
     \left|\I R^{0,\alpha-1} g(t)  \overline{f(t)}d\mu(t)\right|\lesssim ||f||_{\ap}||g||_{A^{1}}  \end{equation}
for all $f\in \ap$, $g\in A^{1}$.

Let $\beta>0$,  take  the test functions
 $$f_{a}(z)=\frac{(1-a^{2})^{\beta}}{(1-az)^{\beta+\frac{2}{p}}}\ \  \mbox{and}\ \ g_{a}(z)=\frac{(1-a^{2})^{\beta}}{(1-az)^{\beta+2}}, \ \ a\in (0,1).$$
 Then $f_{a}\in \ap$, $ g_{a}\in A^{1}$ and
 $$\sup_{0<a<1}||f_{a}||_{\ap}\asymp 1,  \sup_{0<a<1}||g_{a}||_{A^{1}}\asymp 1.$$
  Note that
     $$R^{0,\alpha-1} g_{a}(t)=(1-a^{2})^{\beta} \sn\frac{\Gamma(n+1+\alpha)\Gamma(n+\beta+2)}{\Gamma(n+2)\Gamma(\alpha+1)\Gamma(n+1)\Gamma(\beta+2)}(at)^{n}.$$
     By  Stirling's formula we have that
      \begin{equation}\label{eq5.2}
      R^{0,\alpha-1} g_{a}(t)\asymp \frac{(1-a^{2})^{\beta} }{(1-at)^{\beta+\alpha+1} }. \end{equation}
      Using (\ref{eq5.1}) and (\ref{eq5.2}), we have
       \[ \begin{split}
      1&\gtrsim \sup_{0<a<1}||f||_{\ap}\sup_{0<a<1}||g||_{A^{1}}\gtrsim \left|\I R^{0,\alpha-1} g_{a}(t)  \overline{f_{a}(t)}d\mu(t)\right|\\
      &\gtrsim \int_{a}^{1} R^{0,\alpha-1} g_{a}(t)\frac{(1-a)^{\beta}}{(1-at)^{\beta+\frac{2}{p}}}  d\mu(t)\\
     & \gtrsim \frac{\mu([a,1)) }{(1-a^{2})^{\frac{2}{p}+\alpha+1}}.
           \end{split} \]
  This implies that $\mu$  is a  $\frac{2}{p}+\alpha+1$-Carleson measure.


$(c)\Rightarrow (b)$.  Assume (c), then $\frac{d\mu(t)}{(1-t)^{\frac{2}{p}}}$ is an  $\alpha+1$-Carleson measure by Lemma \ref{lm2.5}. Therefore, for each $n\in \mathbb{N}$, we have
 \begin{equation}\label{eq5.3}
\I t^{n}\frac{d\mu(t)}{(1-t)^{\frac{2}{p}}}=O(\frac{1}{n^{\alpha+1}}).  \end{equation}
  For every $0 \not\equiv f\in \ap$, our assumptions guarantee that $\widetilde{\ii}_{\mu_{\alpha+1}}(f)$
 converges absolutely
 for every $z\in \dd$ and
 $$
\widetilde{\ii}_{\mu_{\alpha+1}}(f)(z)=\I \frac{|f(t)|}{(1-tz)^{\alpha+1}}d\mu(t)=\sum_{n=0}^{\infty}c_{n}z^{n},
 $$
 where  $$c_{n}=\frac{\Gamma(n+1+\alpha)}{\Gamma(n+1)\Gamma(\alpha+1)}\I t^{n}|f(t)|d\mu(t).$$
 Obviously, $\{c_{n}\}_{n=1}^{\infty}$ is   a  nonnegative sequence.  Theorem 3.3.1  in \cite{b4} or \cite[Corollary 3.2]{t1} shows  that
\begin{equation}\label{eq5.4}||\widetilde{\ii}_{\mu_{\alpha+1}}(f)||_{\mathcal {B}}\asymp |c_{0}|+ \sup_{n\geq 1}\frac{1}{n}\sum_{k=1}^{n} kc_{k}.\end{equation}
  Using Stirling's  formula, (\ref{eq4.1}) and (\ref{eq5.3}), we deduce that
  \[ \begin{split}
   |c_{0}|+ \sup_{n\geq 1}\frac{1}{n}\sum_{k=1}^{n} kc_{k}
    &\lesssim  ||f||_{\ap}+  ||f||_{\ap} \sup_{n\geq 1} \frac{1}{n} \sum_{k=1}^{n}k^{\alpha+1}\I t^{k} \frac{d\mu(t)}{(1-t)^{\frac{2}{p}}}\\
    &\lesssim ||f||_{\ap}+  ||f||_{\ap}\sup_{n\geq 1}\frac{1}{n}\sum_{k=1}^{n}k^{\alpha+1} \frac{1}{k^{\alpha+1}}\\
     &\lesssim ||f||_{\ap}.
     \end{split} \]
Hence,  $\widetilde{\ii}_{\mu_{\alpha+1}}: \ap\rightarrow \mathcal {B}$ is bounded.

$(b)\Rightarrow (a)$.  The assumption of the measure $\mu$  guarantee  that $\hur (f)=\iu (f)$ for every $f\in \ap$.
 If  $\widetilde{\ii}_{\mu_{\alpha+1}}: \ap \rightarrow \B$ is bounded, for each $f\in \ap$,  by Lemma \ref{lm4.1} we get
     $$\sup_{n\geq 1}||V_{n}\ast \widetilde{\ii}_{\mu_{\alpha+1}}(f)||_{\infty}\asymp ||\widetilde{\ii}_{\mu_{\alpha+1}}(f)||_{\B}\lesssim ||f||_{\ap} ||\widetilde{\ii}_{\mu_{\alpha+1}}||.$$
     Since the coefficients of $\widetilde{\ii}_{\mu_{\alpha+1}}(f)$ are non-negative, it is easy to check that
     $$M_{\infty}(r,V_{n}\ast \ii_{\mu_{\alpha+1}}(f) )\leq M_{\infty}(r,V_{n}\ast \widetilde{\ii}_{\mu_{\alpha+1}}(f) ) \ \
     \mbox{for all}\  0<r<1 .$$
     Therefore, $$||V_{n}\ast \ii_{\mu_{\alpha+1}}(f)||_{\infty}=\sup_{0<r<1}M_{\infty}(r,V_{n}\ast \ii_{\mu_{\alpha+1}}(f) )\leq  ||V_{n}\ast \widetilde{\ii}_{\mu_{\alpha+1}}(f)||_{\infty}.$$
Consequently,
$$||\hur (f)||_{\B}=||\ii_{\mu_{\alpha+1}}(f)||_{\B}\asymp\sup_{n\geq 1}||V_{n}\ast \ii_{\mu_{\alpha+1}}(f)||_{\infty} \lesssim ||f||_{\ap}.$$
This implies that
 $\hur: \ap\rightarrow \B$ is bounded.
\end{proof}
\begin{re}
The assumption of the measure $\mu$  also  insures that  $\widetilde{\ii}_{\mu_{\alpha+1}}$
 is well defined on $\ap$ for all $0<p<\infty$.
\end{re}

 \begin{thm}\label{th4.4}
 Let  $0<p <\infty$ and $\alpha>-1$. Suppose  $\mu$ is  a positive Borel measure on $[0, 1)$ and satisfies  the conditions of Lemma \ref{lm3.1}. Then the following statements are equivalent:
 \\ (a)  $\hur: A^{p}\rightarrow\mathcal {B}$ is compact;
 \\ (b) $\widetilde{\ii}_{\mu_{\alpha+1}}: A^{p}\rightarrow\mathcal {B}$ is compact;
 \\(c)  $\mu$ is a vanishing $\frac{2}{p}+\alpha+1$ Carleson measure.
\end{thm}
\begin{proof}
It is obvious that $(b)\Rightarrow (a)$.

$(a)\Rightarrow (c)$.  Let $\{a_{n}\}\subset [0,1)$  be any sequence with $a_{n}\rightarrow 1$.  Take
$$f_{n}(z)=\frac{(1-a_{n}^{2})^{\beta}}{(1-a_{n}z)^{\beta+\frac{2}{p}}}, \ \ \mbox{where}\  \beta >0.$$
Then $\{f_{n}\}\subset\ap$, $\sup_{n\geq 1}||f_{n}||_{\ap}\asymp 1$ and $\{f_{n}\}$ converges to $0$ uniformly on every compact subset of $\dd$.
Using Lemma  \ref{lm2.6} and the fact that $\hur: A^{p}\rightarrow\mathcal {B}$ is compact, we have that
$$\lim_{n\rightarrow \infty}||\hur(f_{n})||_{\mathcal {B}}=0.$$
This and (\ref{eq5.1}) imply that
$$\lim_{n\rightarrow \infty}\left|\int_{0}^{1}R^{0,\alpha-1}g(t)f_{n}(t)d\mu(t)\right|=0\ \ \mbox{for all}\  g \in A^{1}.$$
Now, let
$$g_{n}(z)=\frac{(1-a_{n}^{2})^{\beta}}{(1-a_{n}z)^{\beta+2}}.$$
It is clear  that $\{g_{n}\}\subset A^{1}$. Thus, using (\ref{eq5.2}) we have
     \[ \begin{split}
    &\ \ \ \  \left|\int_{0}^{1}R^{0,\alpha-1}g_{n}(t)f_{n}(t)d\mu(t)\right|\\
  &  \asymp  (1-a_{n}^{2})^{2\beta}\I \frac{d\mu(t)}{(1-a_{n}t)^{2\beta+\alpha+1+\frac{2}{p}}}\\
& \gtrsim  (1-a_{n}^{2})^{2\beta} \int_{a_{n}}^{1}\frac{d\mu(t)}{(1-a_{n}t)^{2\beta+\alpha+1+\frac{2}{p}}}\\
&   \gtrsim \frac{\mu([a_{n},1))}{(1-a_{n})^{\alpha+1+\frac{2}{p}}}.
    \end{split} \]
   This  together with $\{a_{n}\}$ is an arbitrary sequence on $[0,1)$  we have that
    $$\lim_{t\rightarrow 1^{-}}\frac{\mu([t,1))}{(1-t)^{\alpha+1+\frac{2}{p}}}=0.$$
    Hence, $\mu$  is a vanishing $\frac{2}{p}+\alpha+1$ Carleson measure.

    $(c)\Rightarrow (b)$.  Assume $(c)$, then $\frac{d\mu(t)}{(1-t)^{\frac{2}{p}}}$ is a vanishing  $\alpha+1$-Carleson measure by Lemma \ref{lm2.5}. Therefore, for any $\varepsilon>0$, there exists a integer $N$ such that
  \begin{equation}\label{eq5.5}
  n^{\alpha+1}\I t^{n}\frac{d\mu(t)}{(1-t)^{\frac{2}{p}}}< \varepsilon  \ \ \mbox{whenever}\ n >N . \end{equation}
     Let $\{f_{k}\}_{k=1}^{\infty}$ be a bounded
sequence in $\ap$  which converges to $0$ uniformly on every compact subset of  $\mathbb{D}$.  For each  $k\in \mathbb{N}$, we have
$$
\widetilde{\ii}_{\mu_{\alpha+1}}(f_{k})(z)=\I \frac{|f_{k}(t)|}{(1-tz)^{\alpha+1}}d\mu(t)=\sum_{n=0}^{\infty}c_{n,k}z^{n},
 $$
 where  $$c_{n,k}=\frac{\Gamma(n+1+\alpha)}{\Gamma(n+1)\Gamma(\alpha+1)}\I t^{n}|f_{k}(t)|d\mu(t).$$
 It is obvious that $\{c_{n,k}\}_{n=1}^{\infty}$ is a nonnegative sequence for each $k \in \mathbb{N}$.   It is sufficient  to prove that
 $$\lim_{k\rightarrow \infty}\left(c_{0,k}+\sup_{n\geq 1}\frac{1}{n}\sum_{j=1}^{n}jc_{j,k}\right) =0$$
 by (\ref{eq5.4}) and Lemma \ref{lm2.6}. Since $\I \frac{d\mu(t)}{(1-t)^{\frac{2}{p}}}<\infty$, there exists a $0<t_{0}<1$ such that
 \begin{equation}\label{eq5.6}\int_{t_{0}}^{1}\frac{d\mu(t)}{(1-t)^{\frac{2}{p}}}<\varepsilon. \end{equation}
  Bearing in mind that  $\{f_{k}\}_{k=1}^{\infty}$   converges to $0$ uniformly on every compact subset of  $\mathbb{D}$, so that there exists a integer $K$
 such that
  \begin{equation}\label{eq5.7}
  \sup_{t\in [0,t_{0}]}|f_{k}(t)|<\varepsilon \ \ \mbox{whenever}\ k>K. \end{equation}
Using (\ref{eq5.1}), (\ref{eq5.6}) and (\ref{eq5.7}) we have that
 \begin{equation}\label{eq5.8}
 \lim_{k\rightarrow \infty}c_{0,k}=\lim_{k\rightarrow \infty}\I |f_{k}(t)|d\mu(t)=0.  \end{equation}
 It is obvious that
 $$\sup_{n\geq 1}\frac{1}{n}\sum_{j=1}^{n}jc_{j,k}\leq \sup_{1\leq n\leq N}\frac{1}{n}\sum_{j=1}^{n}jc_{j,k}+ \sup_{n\geq N+1}\frac{1}{n}\sum_{j=1}^{n}jc_{j,k}.$$
   By Stirling's formula and (\ref{eq5.8}) we have
 \begin{equation}\label{eq5.9}
  \sup_{1\leq n\leq N}\frac{1}{n}\sum_{j=1}^{n}jc_{j,k}
\lesssim   \sup_{1\leq n\leq N}\frac{1}{n}\sum_{j=1}^{n}j^{\alpha+1}\I |f_{k}(t)|d\mu(t)\lesssim \I |f_{k}(t)|d\mu(t)\rightarrow 0, \ (k\rightarrow \infty).  \end{equation}
   On the other hand, by Stirling's formula, (\ref{eq4.1}) and (\ref{eq5.5}) we have
    \[ \begin{split}
     \sup_{n\geq N+1}\frac{1}{n}\sum_{j=1}^{n}jc_{j,k}
     & \lesssim  \sup_{n\geq N+1}\frac{1}{n}\sum_{j=1}^{N}j^{\alpha+1}\I t^{j}|f_{k}(t)|d\mu(t)\\
     &+  \sup_{n\geq N+1}\frac{1}{n}\sum_{j=N+1}^{n}j^{\alpha+1}\I t^{j}|f_{k}(t)|d\mu(t)\\
     & \lesssim  \sup_{n\geq N+1}\frac{1}{n}\sum_{j=1}^{N}j^{\alpha+1}\I|f_{k}(t)|d\mu(t)\\
     &+  \sup_{k\geq 1}||f_{k}||_{\ap}\sup_{n\geq N+1}\frac{1}{n}\sum_{j=N+1}^{n}j^{\alpha+1}\I t^{j}\frac{d\mu(t)}{(1-t)^{\frac{2}{p}}}\\
     & \lesssim \I|f_{k}(t)|d\mu(t)+ \varepsilon \sup_{k\geq 1}||f_{k}||_{\ap}  \sup_{n\geq N+1}\frac{1}{n}\sum_{j=N+1}^{n}j^{\alpha+1}\frac{1}{j^{\alpha+1}}\\
     &  \lesssim \I|f_{k}(t)|d\mu(t)+ \varepsilon \sup_{k\geq 1}||f_{k}||_{\ap}.
       \end{split} \]
       This implies that
     \begin{equation}\label{eq5.10}
     \lim_{k\rightarrow \infty} \sup_{n\geq N+1}\frac{1}{n}\sum_{j=1}^{n}jc_{j,k} =0.  \end{equation}
      Then (\ref{eq5.8})-(\ref{eq5.10}) imply that
       $$\lim_{k\rightarrow \infty}\left(c_{0,k}+\sup_{n\geq 1}\frac{1}{n}\sum_{j=1}^{n}jc_{j,k}\right) =0.$$
     Therefore, $\widetilde{\ii}_{\mu_{\alpha+1}}: A^{p}\rightarrow\mathcal {B}$ is compact.
\end{proof}

\subsection*{Declarations}
The authors declare that there are no conflicts of interest regarding the publication of this paper.
\vskip2mm
  \subsection*{Availability of data and material}
Data sharing not applicable to this article as no datasets were generated or analysed during
the current study: the article describes entirely theoretical research.

\bibliographystyle{els}

\end{document}